\newcommand{\C}{\mathbb{C}}
\renewcommand{\P}{\mathbb{P}}
\newcommand{\Q}{\mathbb{Q}}
\newcommand{\R}{\mathbb{R}}
\renewcommand{\S}{\mathbb{S}}
\newcommand{\T}{\mathbb{T}}
\newcommand{\Z}{\mathbb{Z}}
\newcommand{\fg}{\mathfrak{g}}
\newcommand{\fh}{\mathfrak{h}}
\newcommand{\ft}{\mathfrak{t}}
\newcommand{\cH}{\mathcal{H}}
\newcommand{\cJ}{\mathcal{J}}
\newcommand{\cL}{\mathcal{L}}
\newcommand{\cO}{\mathcal{O}}
\newcommand{\cX}{\mathcal{X}}
\renewcommand{\a}{\alpha}
\renewcommand{\d}{\delta}
\renewcommand{\l}{\lambda}
\renewcommand{\phi}{\varphi}
\renewcommand{\i}{\sqrt{-1}}
\renewcommand{\leq}{\leqslant}
\renewcommand{\geq}{\geqslant}
\newcommand{\abs}[1]{\left\lvert#1\right\rvert}
\newcommand{\norm}[1]{\left\|#1\right\|} 
\newcommand{\inn}[1]{\left\langle#1\right\rangle}
\newcommand{\FS}{\mathrm{FS}}
\newcommand{\PS}{\mathrm{PS}}
\newcommand{\pt}{\mathrm{pt}}
\renewcommand{\Re}{\mathrm{Re}}
\DeclareMathOperator{\Aut}{Aut}
\DeclareMathOperator{\aut}{\mathfrak{aut}}
\DeclareMathOperator{\diag}{diag}
\DeclareMathOperator{\DF}{DF}
\DeclareMathOperator{\GL}{GL}
\DeclareMathOperator{\fgl}{\mathfrak{g}\mathfrak{l}}
\DeclareMathOperator{\Ham}{Ham}
\DeclareMathOperator{\id}{id}
\DeclareMathOperator{\fsl}{\mathfrak{s}\mathfrak{l}}
\DeclareMathOperator{\Tr}{Tr}
\numberwithin{equation}{section}       
\newtheorem{prop} {Proposition} [section]
\newtheorem{thm}[prop] {Theorem} 
\newtheorem{dfn}[prop] {Definition}
\newtheorem{lem}[prop] {Lemma}
\newtheorem{exam}[prop]{Example}
\newtheorem{rem}[prop]{Remark}
\theoremstyle{remark}
\newtheorem*{ackn}{\bf{Acknowledgment}}
\newtheorem*{thmA}{\bf{Theorem A}} 
\newtheorem*{thmB}{\bf{Theorem B}} 
\newtheorem*{thmD}{\bf{Theorem D}}
\newtheorem*{corC}{\bf{Corollary C}} 
\newtheorem*{dfn*}{\bf{Definition}}
\title[Orthogonal projection of a test configuration to vector fields]{Orthogonal projection of a test configuration to vector fields} 
\date{\today} 
\author{Tomoyuki Hisamoto}
\address{Graduate School of Mathematics\\
  Nagoya University\\
  Furocho\\
  Chikusa\\
  Nagoya\\ 
  Japan}
\email{hisamoto@math.nagoya-u.ac.jp}
\subjclass[2010]{Primary: 53C55, Secondary: 32Q26, 32Q20}
\begin{document}

\maketitle
\setcounter{tocdepth}{1}

\begin{abstract}
Given a polarized complex manifold, projection of a torus-equivariant test configuration to holomorphic vector fields was introduced by \cite{Sze06}, as the limit of the associated $\mathbb{C}^*$-actions. We show that there actually holds the moment convergence of the weight distributions. Our analytic approach at the same time specifies the limit in terms of the weak geodesic ray associated with the test configuration. 
Related to the result, we discuss about the reduced $L^p$-norm of the test configuration in attempt to describe the uniform K-stability of the polarization relative to the automorphism group. 
\end{abstract} 

\tableofcontents 

\section{Introduction}\label{introduction} 

Let $(X, L)$ be an $n$-dimensional complex polarized manifold. Our background is the space of K\"ahler metrics which are in the fixed first Chern class $c_1(L)$. By $dd^c$-lemma, it is equivalent to consider the collection of all fiber metrics $e^{-2\phi}$ with positive curvature $\omega_{\phi}:= dd^c\phi$, which we denote by $\cH$. Tangent vectors on $\phi\in \cH$ are identified with a smooth function on $X$ so that the integration by the Monge-Amp\`ere measure 
\begin{equation*}
\inn{u, v}_{\phi}:= \int_X u v  \omega_{\phi}^n/n!
\end{equation*}
defines a natural Riemannian structure on $\cH$. Algebraically a geodesic ray should be regarded as a $\C^*$-equivariant flat family of $\Q$-polarized schmes $(\cX, \cL) \to \C$ with generic fiber isomorphic to $(X, L)$. Such a degeneration called test configuration is important to study various stabilities in classical Geometric Invariant Theory and it also plays a roll testing K-stability of a polarized manifold. Indeed given a test configuration with $\cX$ normal one can attach a {\em weak} geodesic ray $\phi^t$ as a pull-back (by the action of $e^{t}$) of the unique solution $\Phi$ for the Monge-Amp\`ere equation. That is a local fiber metric $e^{-2\Phi}$ of $\cL$ satisfying 
\begin{equation*}
(dd^c \Phi)^{n+1} =0  \ \ \  \text{on} \ \ \ \cX_{\abs{\tau} \leq 1}, 
\end{equation*}
with a Dirichlet boundary value which corresponds to the initial point. 

The most special case is a product test configuration, which is a product family with $\C^*$-action generated by a  holomorphic vector field $v \in \fh(X, L)$. In fact, in parallel to the Hilbert-Mumford criterion in GIT Donaldson defined K-stability generalizing the classical Futaki character defined over $\fh(X, L)$ to the Donaldson-Futaki invariant $\DF(\cX, \cL)$ attached to each test configuration. 

At this point we are led to study ``angle" between a test configuration and a vector field by the above inner product. The algebraic formulation was already introduced by \cite{Sze06} in order to define his uniform K-stability. Let us review it briefly. 
For the purpose it was revealed natural to restrict ourselves to equivariant test configurations. 
Fix a sub-torus $T \subseteq \Aut^0(X, L)$ and denote the reduced Lie algebra by $\ft/\C  \subseteq \fh(X, L)$. Let $(\cX, \cL)$ be a $T$-equivariant test configuration. For any sufficiently divisible $k\geq 1$, the induced action $\l^{(k)}$ on the central fiber $(\cX_0,, \cL_0^{\otimes k})$ defines $A_k \in \fsl(H^0(\cX_0,, \cL_0^{\otimes k}))$ as the trace-free part of the generator. On the other hand $T$-equivariance defines a faithful Lie-algebra representation $\ft/\C \to \fsl(H^0(\cX_0,, \cL_0^{\otimes k})^\vee)$. Under the identification of $\ft/\C$ and its image of the dual representation we may consider the canonical Killing form which yields the orthogonal projection 
\begin{equation*}
P_k: \fsl(H^0(\cX_0,, \cL_0^{\otimes k})) \to (\ft/\C)_\R. 
\end{equation*}
For a fixed basis, coefficients of $P_k(A_k)$ defines a rational function in $k$, which converges to some rational number, as one can see by the equivariant version of the Riemann-Roch-Hirzebruch theorem. Therefore $P_k(A_k)$ converges to an element of $(\ft/\C)_\Q$. It is natural to denote it as $P(\cX, \cL)$. The definition of $P(\cX, \cL)$ already appeared in \cite{CS16}. 

The purpose of this paper is to give the analytic description of $P(\cX, \cL)$ and to show that more precise convergence holds, supporting Sz\'ekelyhidi's insight. 
For a fixed K\"aher form $\omega=dd^c\phi^0$ any vector field $v\in \ft/\C$ defines a unique complex Hamilton function characterized by the property $ \i\bar{\partial}u = i_v \omega$ with the normalization $\int_X u \omega^n=0$. We will take the reference metric $\omega$ as $S:=\ft_\R\otimes \S^1$ invariant. It then follows that the normalized complex Hamilton function takes real values for any $v \in  \ft/\C$. 
We denote the $\R$-vector space consists of such functions as $\Ham_0(\ft, \omega)$. 
It is naturally contained in the space of normalized square integrable functions $L^2_0(X, \omega^n)$. Given a geodesic ray $\phi^t$ emanating from $\phi^0$, the orthogonal projection 
\begin{equation*}
P: L^2_0(X, \omega^n) \to \Ham_0(\ft, \omega)
\end{equation*}
defines the ``projection of the tangent vector" $P(\dot{\phi}^0)$.

\begin{thmA}
Let $(\cX, \cL)$  be a $T$-equivariant normal test configuration. Take a weak geodesic ray $\phi^t$ associated with $(\cX, \cL)$ and an initial metric $\omega= dd^c \phi^0$ which is $S$-invariant. Then, for any integer $p \geq 1$ the moment convergence 
\begin{equation*}
\lim_{k \to \infty} \frac{\Tr P_k(A_k)^p}{k^pN_k} = \frac{1}{V}\int_X P(\dot{\phi}^0)^p \omega^n  
\end{equation*}
holds. 
\end{thmA}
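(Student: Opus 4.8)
The plan is to reduce the statement to the weak convergence of a single family of joint spectral measures and then to integrate polynomials against it. Because $(\cX,\cL)$ is $T$-equivariant, the $T$-action commutes with the $\C^*$-action on $\cX$, so on the central fibre the generator $A_k$ commutes with the operators $B_k^{(1)},\dots,B_k^{(r)}$ representing a fixed basis $v_1,\dots,v_r$ of $\ft/\C$ on $H^0(\cX_0,\cL_0^{\otimes k})$; fixing an $S$-invariant Hermitian structure makes them simultaneously diagonalizable with real weights. I would package their common eigenvalues, rescaled by $1/k$ and counted with multiplicity, into a probability measure $\mu_k$ on $\R^{r+1}$. The whole theorem, together with the identification $\lim_k P_k(A_k)=P(\dot\phi^0)$, will follow once I prove that $\mu_k$ converges weakly, with all moments, to the pushforward of $\omega^n/V$ under the map $x\mapsto(\dot\phi^0(x),u_1(x),\dots,u_r(x))$, where $u_i\in\Ham_0(\ft,\omega)$ is the Hamilton function of $v_i$.

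First I would treat the torus directions. Since the $T$-action commutes with the $\C^*$-action, the weights of $B_k^{(i)}$ coincide with the weights of $v_i$ on $H^0(X,L^{\otimes k})$, so the torus marginal is governed by the $T$-action on $(X,L)$ alone: the $u_i$ are the components of the moment map for the $S$-action on $(X,\omega)$, and by the equivariant Bergman kernel expansion (equivalently the Duistermaat--Heckman asymptotics for the $T$-action) their joint weight measure converges, with all moments, to $(u_1,\dots,u_r)_*\,\omega^n/V$. Next I would bring in the test-configuration direction: the content of the abstract is exactly that the normalized weight measure of $A_k/k$ converges to $(\dot\phi^0)_*\,\omega^n/V$, and I would prove this by relating $\Tr A_k^p$ to integrals along the ray $\phi^t$ through the Bergman-kernel description of the weak geodesic (in the spirit of Phong--Sturm and Berman), so that the first moment recovers the slope of the Monge--Amp\`ere energy and higher moments recover $\tfrac1V\int_X(\dot\phi^0)^p\omega^n$. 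The joint statement then comes from running these two asymptotic expansions simultaneously for the combined $\C^*\times T$-action, whose moment map on the total space has components $(\dot\phi^0,u_1,\dots,u_r)$.

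Granting the joint convergence, I would finish as follows. Applying it to the bilinear expressions $k^{-2}N_k^{-1}\Tr(A_kB_k^{(i)})$ and $k^{-2}N_k^{-1}\Tr(B_k^{(i)}B_k^{(j)})$ shows that the normalized Hilbert--Schmidt (Killing) inner products converge to the $L^2(\omega^n/V)$ inner products of the corresponding Hamilton functions; since $P_k(A_k)$ is defined by these inner products, its coefficients in the basis $v_1,\dots,v_r$ converge to those of the $L^2$-orthogonal projection, giving $\lim_k P_k(A_k)=P(\dot\phi^0)=\sum_i c_i u_i$. Writing $P_k(A_k)=\sum_i c_i^{(k)}B_k^{(i)}$ with $c_i^{(k)}\to c_i$ and expanding $\Tr P_k(A_k)^p=\Tr(\sum_i c_i^{(k)}B_k^{(i)})^p$, the joint moment convergence yields
\begin{equation*}
\lim_{k\to\infty}\frac{\Tr P_k(A_k)^p}{k^pN_k}=\frac1V\int_X\Big(\sum_i c_i u_i\Big)^p\omega^n=\frac1V\int_X P(\dot\phi^0)^p\,\omega^n .
\end{equation*}

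I expect the main obstacle to be the low regularity of the weak geodesic ray. The potential $\Phi$ solving the homogeneous Monge--Amp\`ere equation is in general only bounded with bounded Laplacian, and the central fibre $\cX_0$ may be singular, so the Bergman-kernel asymptotics that identify the $A_k/k$-weights with $\dot\phi^0$ cannot be quoted off the shelf. I anticipate controlling this by approximating $\Phi$ by smooth subgeodesics (or by passing to a resolution of $(\cX,\cL)$ and working on the smooth model), proving the moment convergence with error estimates uniform in $k$, and then passing to the limit; the delicate point is to keep these estimates uniform while simultaneously diagonalizing against the torus action, which is precisely what forces the joint rather than the separate treatment.
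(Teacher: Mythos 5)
Your architecture is sound and, once unpacked, rests on the same three ingredients as the paper's proof: equivariant asymptotics for the pure torus moments, a Bergman-kernel (Tian--Zelditch--Catlin) identification of traces with integrals, and the convergence of the spectral data of $A_k$ to $\dot{\phi}^0$. The organization, however, is genuinely different and in one respect cleaner. The paper never proves a joint spectral convergence; instead it shows that the function $f_k:=\frac{s^*P_k(A_k)s}{ks^*s}$ converges to $P(\dot{\phi}^0)$ in every $L^p$ and then applies the kernel expansion a second time to convert $\frac{1}{V}\int_X f_k^p\,\omega^n$ back into $\Tr P_k(A_k)^p/(k^pN_k)$. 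Your endgame avoids that second pass: once $c_i^{(k)}\to c_i$, the expansion of $\Tr\big(\sum_i c_i^{(k)}B_k^{(i)}\big)^p$ involves only pure torus moments $\Tr\big(B_k^{(i_1)}\cdots B_k^{(i_p)}\big)/(k^pN_k)$, which converge by equivariant Riemann--Roch (Proposition \ref{product case} together with \cite{AB84}) with no reference to the geodesic ray at all. Note also that you do not need the full joint moment convergence you announce: mixed moments such as $\Tr(A_k^2B_k^{(i)})$ never enter your final computation, and only $\Tr(A_kB_k^{(i)})$, $\Tr(B_k^{(i)}B_k^{(j)})$ and the pure torus moments are used. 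That is fortunate, because the high mixed powers of $A_k$ are not easier to control than the theorem itself.

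The one place where your proposal under-delivers is exactly where the paper does its real work: the mixed second moment $\lim_k\Tr(A_kB_k^{(i)})/(k^2N_k)=\frac{1}{V}\int_X\dot{\phi}^0u_i\,\omega^n$, which is what makes $c_i^{(k)}\to c_i$ and hence identifies $\lim_kP_k(A_k)$ with $P(\dot{\phi}^0)$. You propose to handle the low regularity of $\Phi$ by smooth subgeodesics or a resolution ``with error estimates uniform in $k$,'' but this is a sketch of a sketch. The paper's mechanism (Lemma \ref{Lp} combined with Proposition \ref{Bergman level}) is quite specific and you should reproduce something equivalent: the Phong--Sturm Bergman potentials satisfy $\phi_k^0-Ck^{-1}\log k\leq\phi^0\leq\phi_k^0+Ck^{-1}$; maximality of the Monge--Amp\`ere solution propagates the lower bound to all $t>0$; convexity in $t$ gives $\dot{\phi}^t\geq\dot{\phi}_k^0-Ck^{-1}(1+\log k)/t$; the $C^{1,\alpha}$-regularity of the ray lets one send $t\to0$ to get the pointwise bound $\dot{\phi}^0\geq\limsup_k\dot{\phi}_k^0$; and this one-sided bound together with the known convergence of the first moments $\int_X\dot{\phi}_k^0\,\omega^n\to\int_X\dot{\phi}^0\,\omega^n$ upgrades to $L^1$- and then $L^p$-convergence of $\dot{\phi}_k^0\to\dot{\phi}^0$. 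The kernel expansion then identifies $\frac{1}{V}\int_X\dot{\phi}_k^0u_i\,\omega^n$ with $\Tr(A_kB_k^{(i)})/(k^2N_k)$ up to $O(k^{-1}\log k)$. Without this step, or a genuine substitute for it, the $A_k$-direction of your joint convergence is unproved and the projection argument has no input.
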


When the test configuration is generated by a vector field, the formula is a consequence of equivariant Chern-Weil theory as it was explained by \cite{Sze06}. The above theorem gives a generalization of \cite{His12} where $T=\C^*$ case was treated. The argument in the present paper gives another proof for that theorem. 

Let us explain for our related discussion about K-stability. 
Setting 
\begin{equation*}
\norm{(\cX, \cL)}_p = \bigg( \frac{1}{V}\int_X \abs{\dot{\phi}^0}^p \omega^n  \bigg)^{1/p}, 
\end{equation*}
we say a polarized manifold is {\em uniformly} K-stable if there exists a positive constant $\d$ such that 
\begin{equation*}
\DF(\cX, \cL) \geq \d \norm{(\cX, \cL)}_1  
\end{equation*}
holds for any normal test configuration. The notion of uniform K-stability (relative to a maximal torus) was first introduced \cite{Sze06}. 
We have to take $p\leq \frac{n}{n-1}$ otherwise there is no uniformly K-stable polarized manifold for $n \geq 3$ (see \cite{Sze06} and \cite{BHJ15}). 
The exponent $p=1$ has speciality here in relation with the coercivity property of the K-energy.  
The theory of such new stability was developed as one can seen in \cite{Sze06}, \cite{Der14a}, \cite{Der14b}, \cite{BBJ15}, \cite{BHJ15}, \cite{DR15}, \cite{BHJ16}, and \cite{BDL16}. 
Especially it was shown by \cite{BDL16} that if the reduced automorphism group is discrete, any polarized manifold $(X, L)$ admitting a constant scalar curvature K\"ahler metric is uniformly K-stable. 
However, when $\Aut^0(X, L)/\C^*$ is not discrete the  above norm does not work to specify the existence of a special metric. 
The situation lets us analytically define the reduced norm as follows. 
Take a torus and set $P^\bot:= \id -P$ as the projection to the orthogonal complement. The reduced norm for $T$-equivariant test configuration is 
\begin{equation*}
\norm{(\cX, \cL)}_{T, p} := \bigg( \frac{1}{V}\int_X \abs{P^\bot(\dot{\phi}^0)}^p \omega^n  \bigg)^{1/p}. 
\end{equation*} 
Of course by Theorem A it coincides with the algebraic definition of \cite{Sze06} and \cite{CS16}. 
Now we take the torus as the center $C(G)$ of a subgroup $G\subseteq \Aut^0(X, L)$. We say that a polarized manifold is uniformly K-stable relative to $G$ if there exists a constant $\d>0$ such that 
\begin{equation*}
\DF(\cX, \cL) \geq \d \norm{(\cX, \cL)}_{C(G), 1}   
\end{equation*} 
holds for any $G$-equivariant test configuration $(\cX, \cL)$. 
At this stage there is no natural choice of a maximal torus and we are especially interested in the case $G=\Aut^0(X, L)$, which should be related with the existence of a special metric. 
See also the explanation below Definition \ref{uniform stability relative to G}. 
The restriction to the $G$-equivariant test configurations seems a bit strong but admissible. It is because if there exists a constant scalar curvature K\"ahler metric $G=\Aut^0(X, L)$ is reductive by the Matsushima-Lichnerowicz theorem and then the Donaldson-Futaki invariant vanishes on the semisimple part $[\fg/\C, \fg/\C]$ of $\fg/\C$ since it defines a character. 
The following result we will show supports the above definition of the norm for it gives a characterization of product test configurations. 
\begin{thmB}
Let $T \subseteq \Aut^0(X, L)$ be an algebraic torus. Then a $T$-equivariant normal test configuration $(\cX, \cL)$ is product if and only if the reduced norm $\norm{(\cX, \cL)}_{T, p}$ is zero for some $p\geq 1$. 
\end{thmB}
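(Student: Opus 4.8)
The plan is to reduce Theorem B to the known triviality criterion for the ordinary $L^p$-norm, by twisting the test configuration so as to absorb the projection $P(\dot\phi^0)$. Since $P$ is the orthogonal projection onto the finite-dimensional space $\Ham_0(\ft,\omega)$ of smooth functions, for any fixed $p\geq 1$ the vanishing $\norm{(\cX,\cL)}_{T,p}=0$ is equivalent to $P^\bot(\dot\phi^0)=0$ almost everywhere, that is to $\dot\phi^0=P(\dot\phi^0)\in\Ham_0(\ft,\omega)$. By Theorem A the analytic projection $P(\dot\phi^0)$ is the normalized Hamiltonian $u_{v_0}$ of the vector field $v_0:=P(\cX,\cL)$, and it is here that I use the algebraic side of the picture: $v_0$ lies in the rational lattice $(\ft/\C)_\Q$. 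Thus the reduced norm vanishes if and only if $\dot\phi^0=u_{v_0}$ for a rational $v_0\in\ft/\C$. The converse implication of the theorem is then immediate: a product configuration generated by $v\in\ft/\C$ has the holomorphic flow of $v$ as its weak geodesic ray, so $\dot\phi^0=u_v\in\Ham_0(\ft,\omega)$ and $P^\bot(\dot\phi^0)=0$.

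Next I would exploit the rationality of $v_0$ in order to twist. Choosing $m\geq 1$ so that $mv_0$ generates an honest one-parameter subgroup $\sigma\subseteq T$, I form the twisted test configuration $(\cX',\cL')$ whose $\C^*$-action is the original action composed with $\sigma(\tau)^{-1}$ (working with $\Q$-test configurations, or after an $m$-fold base change, to absorb the denominator). The step to be carried out is the twisting formula for the associated weak geodesic rays: the ray of $(\cX',\cL')$ is obtained from $\phi^t$ by applying the flow of $-v_0$, so that $\dot\phi'^0=\dot\phi^0-u_{v_0}$. Granting this, the condition $\dot\phi^0=u_{v_0}$ is exactly $\dot\phi'^0=0$, equivalently the vanishing of the ordinary norm $\norm{(\cX',\cL')}_p$.

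It then remains to invoke the triviality criterion for the non-reduced norm. For a normal test configuration the distribution of $\dot\phi'^0$ with respect to $\omega^n$ is the Duistermaat--Heckman measure of $(\cX',\cL')$ (cf.\ \cite{His12}), and $\norm{(\cX',\cL')}_p=0$ forces this measure to be a Dirac mass, which for a normal configuration happens precisely when $(\cX',\cL')$ is trivial; this is the $L^p$ case of the Boucksom--Hisamoto--Jonsson vanishing criterion \cite{BHJ15}. Untwisting, the triviality of $(\cX',\cL')$ says exactly that $(\cX,\cL)$ is the twist of the trivial configuration by $v_0$, that is the product configuration generated by $v_0\in\ft/\C$, which settles the remaining implication.

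The step I expect to be the main obstacle is the twisting formula $\dot\phi'^0=\dot\phi^0-u_{v_0}$ at the level of weak geodesic rays. Since $\Phi$ is only a bounded solution of the degenerate homogeneous Monge--Amp\`ere equation, I cannot differentiate the flow naively; instead I would deduce the formula from the uniqueness of the bounded solution together with $\C^*$-equivariance, writing the twisted solution as the pull-back of $\Phi$ by the holomorphic flow of $\sigma$ and reading off both the unchanged Dirichlet datum and the linear-in-$\tau$ shift of the weight. The rationality $v_0\in(\ft/\C)_\Q$ furnished by Theorem A is exactly what turns this twist into an algebraic operation, so that the triviality criterion may be applied; an irrational projection would leave no algebraic configuration to test against.
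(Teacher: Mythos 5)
Your proposal is correct and follows the same overall strategy as the paper: establish that the projection $v_0=P(\cX,\cL)$ is rational via equivariant Riemann--Roch, twist the configuration by $-v_0$ after a base change, and invoke the triviality criterion of \cite{BHJ15} for the vanishing of the non-reduced norm. The one place where you diverge is the step you yourself flag as the main obstacle: you propose to prove the twisting formula $\dot{\phi}'^0=\dot{\phi}^0-u_{v_0}$ at the level of weak geodesic rays, via uniqueness of the bounded solution of the homogeneous Monge--Amp\`ere equation. The paper never needs this. Instead it evaluates the algebraic $L^2$-norm of the twisted configuration directly on the central fiber, expanding
\begin{equation*}
\norm{(\cX',\cL')}_2^2=\lim_{k\to\infty}\frac{\Tr A_k^2-2\Tr A_kB_k+\Tr B_k^2}{k^2N_k}
\end{equation*}
and identifying each term with $\int_X(\dot{\phi}^0)^2\omega^n$, $\int_X\dot{\phi}^0 u_{v_0}\omega^n$, $\int_X u_{v_0}^2\omega^n$ respectively, using the Bergman-level asymptotics (Proposition \ref{Bergman level}), the $L^p$-convergence of $\dot{\phi}_k^0$ (Lemma \ref{Lp}), and equivariant Chern--Weil theory. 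Equivalently, the identity $\norm{(\cX',\cL')}_p^p=\frac{1}{V}\int_X\abs{\dot{\phi}^0-u_{v_0}}^p\omega^n$ is already available from the discussion of $\norm{(\cX,\cL)}_{v,p}$ in Section \ref{def norm}, obtained by simultaneous diagonalization of $A_k$ and $B_k$ rather than by manipulating the degenerate Monge--Amp\`ere solution. So your route is viable but routes through an analytic lemma that can be bypassed entirely; the algebraic computation is both safer and already in place from the quantization results. One further small point: the rationality of $v_0$ is not literally part of Theorem A (which is the moment convergence); it is a separate lemma, proved by expressing $2\Tr A_kB_{j,k}=\Tr(A_k+B_{j,k})^2-\Tr A_k^2-\Tr B_{j,k}^2$ as polynomials in $k$ with rational coefficients, and you should supply that argument rather than cite it as a consequence of the convergence statement.
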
 
When $p=2$ the result was independently proved in \cite{CS16}. In fact once noting that the projection $P(\phi^0)$ defines a rational element we may easily reduce it to $\fh(X, L)=\{0\}$ case where the result was obtained by\cite{BHJ15} and \cite{Der14a}. 
The reduced norm is determined by the $\C^*$-action on the central fiber so that it immediately yields: 
\begin{corC} 
A $T$-equivariant test configuration is product coming from $\ft$ if and only if the central fiber $(\cX_0, \cL_0)$ is $T$-equivariantly isomorphic to $(X, L)$ endowed with some $\C^*$-action to $T$.  
\end{corC}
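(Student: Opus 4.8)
The plan is to derive Corollary C directly from Theorem B, the key point being that the reduced norm $\norm{(\cX,\cL)}_{T,p}$ is an invariant of the central fiber alone. Indeed, the generator $A_k$ of the induced action $\l^{(k)}$ and the faithful representation $\ft/\C\to\fsl(H^0(\cX_0,\cL_0^{\otimes k})^\vee)$ that determines the projection $P_k$ are both extracted solely from $(\cX_0,\cL_0)$ equipped with its $\l$- and $T$-actions. Hence, writing $P_k^\bot:=\id-P_k$, Theorem A — together with the ensuing identification of $\norm{(\cX,\cL)}_{T,p}$ with its algebraic definition — expresses $\norm{(\cX,\cL)}_{T,p}^p$ as $\lim_{k\to\infty}\Tr P_k^\bot(A_k)^p/(k^pN_k)$, a quantity depending only on this central-fiber datum.

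The forward implication is immediate. If $(\cX,\cL)$ is the product test configuration coming from $v\in\ft$, then $\cX_0\cong X$ $T$-equivariantly, the $T$-action on the central fiber is the standard one, and the structure action $\l$ restricts on $\cX_0$ to the one-parameter subgroup $\exp(tv)\subseteq T$, which is precisely the asserted form.

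For the converse I would assume that $(\cX_0,\cL_0)$ is $T$-equivariantly isomorphic to $(X,L)$ with the $\l$-action given by a one-parameter subgroup $\exp(tw)\subseteq T$, $w\in\ft$. Under this identification the generator $A_k$ of $\l^{(k)}$ is the trace-free part of the image of $w$, so $A_k$ lies in the image of $(\ft/\C)_\R$; being an orthogonal projection onto that subspace, $P_k$ then fixes $A_k$, giving $P_k^\bot(A_k)=0$ for every sufficiently divisible $k$. Passing to the limit in Theorem A yields $\norm{(\cX,\cL)}_{T,p}=0$, so Theorem B applies and $(\cX,\cL)$ is product. To upgrade ``product'' to ``product coming from $\ft$'' I use that the vanishing of the reduced norm forces $P^\bot(\dot{\phi}^0)=0$, i.e. $\dot{\phi}^0=P(\dot{\phi}^0)\in\Ham_0(\ft,\omega)$; since for a product family $\dot{\phi}^0$ is the normalized Hamiltonian of the generating field, that field is the one associated with $P(\cX,\cL)\in(\ft/\C)_\Q$, and in particular lies in $\ft$.

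The step I expect to require the most care is the bookkeeping of the identifications built into the definition of $P_k$ — the passage between $H^0(\cX_0,\cL_0^{\otimes k})$ and its dual, the insertion of the canonical Killing form, and the normalization to the trace-free part — needed to confirm that an $\l$-action lying inside $T$ really places $A_k$ in the image of $(\ft/\C)_\R$ with $P_k(A_k)=A_k$ \emph{exactly}, rather than only in the limit. Once this compatibility is checked the remaining argument is formal: a product test configuration is reconstructed from its central fiber together with the $\C^*$-action, so the central-fiber condition and ``product coming from $\ft$'' become equivalent, in accordance with the remark preceding the statement.
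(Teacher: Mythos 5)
Your proposal is correct and follows essentially the same route as the paper: the paper deduces Corollary C from Theorem B precisely by observing that the reduced norm $\lim_k \Tr P_k^\bot(A_k)^p/(k^pN_k)$ is computed entirely from the central fiber with its $\l$- and $T$-actions, so the hypothesis forces $P_k^\bot(A_k)=0$ and hence vanishing reduced norm. Your additional care about the identifications defining $P_k$, and the upgrade from ``product'' to ``product coming from $\ft$'' via $\dot{\phi}^0\in\Ham_0(\ft,\omega)$, are consistent with what the proof of Theorem B in Section 4 actually establishes (namely $A_1=B_1$ for a generator $B_1$ coming from a 1-PS of $T$).
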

Notice when $T$ is trivial we assumed that the induced action on the central fiber is trivial. It seems not known whether a test configuration whose central fiber is only isomorphic to $(X, L)$ is product. 
Finally, we remark that our study combined with the argument of \cite{His12} immediately deduces the relative version of lower bound estimate for the Calabi type functionals, at least in the Fano case. We recall that the Donaldson-Futaki invariant relative to $G$ is defined to be 
\begin{equation*}
\DF_T(\cX, \cL):= \DF(\cX, \cL)- \DF(P(\cX, \cL)). 
\end{equation*}
\begin{thmD}
Let $X$ be a Fano manifold and $\omega$ a K\"ahler metric in the anti-canonical class. We denote the normalized Ricci potential by $h_\omega$. Then for any normal test configuration $(\cX, \cL)$ we have the lower bound estimate 
\begin{equation*}
\bigg(\frac{1}{V}\int_X \abs{P^{\bot}({e^{h_{\omega}}-1})}^q \omega^n\bigg)^{1/q}
\geq \frac{-\DF_T(\cX, \cL)}{\norm{(\cX, \cL)}_{T, p}}. 
\end{equation*}

\end{thmD}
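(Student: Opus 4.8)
The plan is to reduce the relative estimate to the non-relative Calabi-type bound of \cite{His12} by pairing against the orthogonal projection, the only genuinely new input being the identification of the algebraic projection with the analytic one furnished by Theorem A. Throughout I would write $\inn{f, g}:=\frac{1}{V}\int_X fg\,\omega^n$ for the pairing on $L^2_0(X,\omega^n)$, so that $\norm{(\cX,\cL)}_{T,p}=\norm{P^\bot(\dot\phi^0)}_p$ in the normalized $L^p$-norm and $e^{h_\omega}-1\in L^2_0(X,\omega^n)$ by the mean-zero normalization of $h_\omega$.

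First I would record the fundamental pairing inequality in the anti-canonical class, which is the analytic content of \cite{His12}: for any normal test configuration,
\[
-\DF(\cX,\cL)\ \le\ \frac{1}{V}\int_X (e^{h_\omega}-1)\,\dot\phi^0\,\omega^n .
\]
This rests on the convexity of the Ding functional $\cD$ along the weak geodesic ray $\phi^t$ (Berndtsson), whose right derivative at $t=0$ equals $-\frac{1}{V}\int_X(e^{h_\omega}-1)\dot\phi^0\,\omega^n$ by the standard first variation and whose asymptotic slope is the Ding invariant; chaining the slope comparison $\mathrm{Ding}(\cX,\cL)\le\DF(\cX,\cL)$ (Berman) with the convexity estimate slope $\ge$ right derivative gives the display. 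I would then treat the projected product configuration. By Theorem A the algebraic projection $P(\cX,\cL)\in(\ft/\C)_\Q$ is identified with the Hamiltonian $P(\dot\phi^0)\in\Ham_0(\ft,\omega)$; since along the holomorphic flow it generates the Ding functional is affine, its slope coincides with its initial derivative, and the inequality above sharpens to the equality
\[
-\DF(P(\cX,\cL))\ =\ \frac{1}{V}\int_X (e^{h_\omega}-1)\,P(\dot\phi^0)\,\omega^n ,
\]
which is the classical Futaki-invariant formula in the anti-canonical class (equivariant Chern--Weil, \cite{Sze06}).

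Subtracting the equality from the inequality and using $P^\bot(\dot\phi^0)=\dot\phi^0-P(\dot\phi^0)$ yields
\[
-\DF_T(\cX,\cL)=-\DF(\cX,\cL)+\DF(P(\cX,\cL))\ \le\ \frac{1}{V}\int_X (e^{h_\omega}-1)\,P^\bot(\dot\phi^0)\,\omega^n .
\]
Finally I would invoke self-adjointness of the orthogonal projection together with H\"older. Since $P^\bot$ is a self-adjoint idempotent on $L^2_0(X,\omega^n)$ and $e^{h_\omega}-1$ lies there, $\inn{e^{h_\omega}-1, P^\bot(\dot\phi^0)}=\inn{P^\bot(e^{h_\omega}-1), P^\bot(\dot\phi^0)}$, so H\"older's inequality with $\tfrac1p+\tfrac1q=1$ gives
\[
-\DF_T(\cX,\cL)\ \le\ \bigg(\frac{1}{V}\int_X\abs{P^\bot(e^{h_\omega}-1)}^q\omega^n\bigg)^{1/q}\,\norm{(\cX,\cL)}_{T,p},
\]
and dividing by $\norm{(\cX,\cL)}_{T,p}$ gives the assertion.

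The main obstacle is the first step rather than the formal manipulations that follow: establishing the fundamental pairing inequality for the weak geodesic ray of a possibly singular test configuration, i.e. the convexity and one-sided differentiability of the Ding functional along $\phi^t$ at the endpoint $t=0$, the finiteness $\dot\phi^0\in L^p$ ensuring all integrals converge, and the slope comparison $\mathrm{Ding}\le\DF$. Once this analytic input of \cite{His12} is in hand, the passage to the relative statement is purely formal, its only new ingredients being the identification supplied by Theorem A and the exactness of the Futaki term for the product configuration generated by $P(\dot\phi^0)$.
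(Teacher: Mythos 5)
Your proposal is correct and follows exactly the route the paper indicates: the paper gives no written proof of Theorem D beyond the remark that it follows from ``our study combined with the argument of \cite{His12}'', and your argument fills in precisely that combination --- the Ding-functional pairing inequality of \cite{His12}, the exact Futaki formula for the product ray generated by $P(\cX,\cL)$ (identified with $P(\dot\phi^0)$ via Theorem A), self-adjointness of $P^\bot$, and H\"older. The only point worth flagging is the degenerate case $\norm{(\cX,\cL)}_{T,p}=0$, where the stated inequality must be read as vacuous (by Theorem B the configuration is then a product and $\DF_T=0$).
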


\begin{ackn}
The author express his gratitude to Yuji Sano for his indicating the original idea of \cite{FM95}. We discussed during our stay in Banff center where the workshop ``Complex Monge-Amp\`{e}re Equations on Compact K\"{a}hler Manifolds" was held in the spring, 2014. We are grateful to their support. The author also wishes to thank Robert Berman, S\'ebastien Boucksom, Mattias Jonsson, Yasufumi Nitta, Shinnosuke Okawa, and David Witt Nystr\"om for their helpful comments. Especially in communication with Berman and Jonsson we were made to realize the necessity of G-equivariance in our formalism and their equivarent (but seems to be more direct) definition of the reduced norm. The author is supported by JSPS KAKENHI Grant Number 15H06262. 
\end{ackn}


\section{Preliminary materials}\label{preliminary}

\subsection{Automorphism group, Hamilton function}

We starts from reviewing the basic materials for the automorphism group and Hamiltonnian action. It seems that involved definitions are sometimes confused in literatures so I believe it is helpful to organize things here. In this paper $X$ always stands for a smooth complex projective variety and $L$ is an ample line bundle on $X$. We denote the automorphism group of $L$ by $\Aut(X, L)$ and mainly focus on the identity component $\Aut^0(X, L)$. The reduced automorphism group $\Aut(X, L)/\C^*$ is embedded to $\Aut(X)$: the automorphism group of $X$. For any $k \geq 1$ the automorphism group $\Aut(X, L)$ naturally acts on $H^0(X, L^{\otimes k})$ so that $g \in \Aut(X, L)$ shifts $s \in H^0(X, L^{\otimes k})$ in the manner:  
\begin{equation}\label{action}
(g^*s)(x) := (g^{\otimes k})^{-1}(s(gx)),  
\end{equation}
where $g^{\otimes k}: L^{\otimes k} \to L^{\otimes k}$ is the induced automorphism. It defines the canonical representation $\Aut(X, L) \to \GL(H^0(X, L^{\otimes k})^{\vee})$ which is faithful for $k\gg 1$ by the ampleness. It particularly shows that $\Aut(X, L)$ is an algebraic group ($\Aut(X)$ is not!). 

The reduced Lie algebra 
\begin{equation*}
\fh(X, L) := \aut(X, L)/\C
\end{equation*}
consists of holomorphic vector fields which can be lifted to $L$. Once a K\"ahler form $\omega \in c_1(L)$ is fixed, the lifting property for a vector field $v \in \fh(X)$ is equivalent to the existence of a complex Hamilton function $u$ which is characterized by 
\begin{equation*}
\i \bar{\partial} u = i_v \omega. 
\end{equation*}
We refer \cite{Don02} for the proof. Just like $v$, $u$ is unique up to the constant. We say $u$ is normalized if it satisfies $\int_X u \omega^n=0$. Notice that the above characterization in particular implies $\fh(X, L^{\otimes k})=\fh(X, L)$ and hence $\Aut^0(X, L^{\otimes k})=\Aut^0(X, L)$ for any exponent $k \geq 1$. 

Let us consider a sub-torus $T\subseteq \Aut^0(X, L)$. Collection of all one-parameter subgroups (1-PS for short) to $T$ naturally defines a lattice $\ft_\Z$ in $\ft$ so that $\ft_\R:= \ft_\Z \otimes \R$ defines a natural real form. We denote the compact part by $S:=\ft_\Z \otimes \S^1$ and take $\omega$ as $S$-invariant.  
Then $\i\partial \bar{\partial}u= L_{\Re v} \omega$ holds for a general $v \in \ft/\C$, since the Cartan's formula yields 
\begin{equation*}
 \i\partial \bar{\partial}u  = d(\sqrt{-1}\bar{\partial}u) =  di_v \omega = L_v \omega=L_{\Re v}\omega.  
\end{equation*} 
One can observe that $u$ is a real function modulo the constant because the  right-hand side is a real form. 

Finally, the differential representation $\aut(X, L) \to \fgl(H^0(X, L^{\otimes k})^\vee)$ induces 
\begin{equation*}
\fh(X, L) \to \fsl(H^0(X, L^{\otimes k})^\vee). 
\end{equation*}  
This map is the starting point of our study. 
For each vector $v \in \aut(X, L)$ the image of $-v$ by the dual representation $\aut(X, L) \to \fgl(H^0(X, L^{\otimes k}))$ is called the generator.     
In the next subsection we extend the formalism to general equivariant test configurations. Anyhow, it essentially requires us to consider bundle morphisms $\Aut(X, L)$ while $\fh(X, L)$ is adopted to our purpose thanks to the non-degenerate Killing form .


\subsection{Test configurations and equivariant embeddings}

We shall introduce a test configuration $(\cX, \cL)$ as a $\C^*$-equivariant flat family of $\Q$-polarized schemes, with generic fiber isomorphic to $(X, L)$. 
In this note the total space $\cX$ is always assumed to be normal, while the central fiber is very singular, not even reduced. As it was shown by \cite{LX11}, the normality assumption is in fact necessary to exclude a pathological example for which conclusion of Theorem B does not hold. 

We denote the $\C^*$-action by $\l$. The induced action $\l^\vee: \C^* \to \GL(H^0(\cX_0, \cL_0^{\otimes k}))$ defines $A'_k \in \ \fgl(H^0(\cX_0, \cL_0^{\otimes k}))$ such that $\l^\vee(e^{-t}) = \exp(tA'_k)$ holds for any $t \in \R$. We call it a generator. As a standard fact of representation theory  $A'_k$ is diagonalized to $\diag (\l_1', \dots \l_{N_k}')$ in a chosen basis. Following \cite{Don02}, we define the Donaldson-Futaki invariant as the coefficient in the subleading term of the asymptotic expansion
\begin{equation*}
\frac{\sum_{i=1}^{N_k} \l_i'}{kN_k} = F_0 + F_1 k^{-1} + O(k^{-2}). 
\end{equation*}
Namely, $\DF(\cX, \cL):= F_1$. In the present note, however, we rather mind the trace-free part 
\begin{equation*}
A_k := A_k' - \frac{\Tr A_k}{N_k} \id 
\end{equation*}
with weights $\l_i = \l_i' - N_k^{-1}\sum \l_i'$ and focus on the limit behavior of $\Tr A_k^p$. 

The most basic examples are product test configurations. We call $(\cX, \cL)$ product if $\cX \simeq X \times \C$ and $\cL \simeq p_1^*L$ holds. It is equivalent to give a one-parameter subgroup (1-PS in short) $\l: \C^* \to \Aut^0(X, L)$, which uniquely defines a $\C^*$-action on the product space in the equivariant way. A general test configuration can be regarded as a natural generalization of such a 1-PS as one takes an equivariant embedding. Actually, for any $r \geq 1$ any 1-PS $\l^\vee: \C^* \to \GL(H^0(X, L^{\otimes r}))$ and its dual $\l=(\l^\vee)^\vee$ defines a test configuration as the Zariski closure of the orbit: 
\begin{equation*}
\cX := \overline{\cup_{\tau \in\C^*} \l(\tau)(X) \times \{\tau \}} \subseteq \P(H^0(X, L^{\otimes r})^\vee)\times \C, 
\end{equation*} 
endowed with the $\Q$-line bundle $\cL:= \cO(1/r)\big\vert_{\cX}$. If one forcuses on the central fiber one isomorphism $H^0(\cX_0, \cL_0^{\otimes k}) \simeq H^0(X, L^{\otimes k})$ is obtained from this construction. 
Let us take a sufficiently divisible $k\geq 1$ and an equivariant embedding 
\begin{equation*}
I_k: (\cX, \cL^{\otimes k})  \to (\P(H^0(\cX_0, \cL_0^{\otimes k})^\vee)\times \C, \cO(1))
\end{equation*}
so that the restriction on the central fiber gives the Kodaira embedding. The embedding $I_k$ is determined by the isomorphism $T_k: H^0(\cX_0, \cL_0^{\otimes k}) \to H^0(X, L^{\otimes k})$ which makes the following diagram commutes. 
\[\xymatrix{
   {H^0(\cX_0, \cL_0^{\otimes k})}\ar[r]^{I_k^*}\ar[d]_{\l^\vee(\tau)} 
 & {H^0(\cX_\tau, \cL_\tau^{\otimes k})}\ar[d]^{\l^\vee(\tau)} \\ 
   {H^0(\cX_0, \cL_0^{\otimes k})}\ar[r]_{T_k} 
 & H^0(\cX_1, \cL_1^{\otimes k})
}
\]
Such an isomorphism $T_k$ derived from an equivariant embedding is called regular. 
Notice that the flatness assumption yields $H^0(\cX_0, \cL_0^{\otimes k}) \simeq H^0(X, L^{\otimes k})$ but we do not have a natural isomorphism. 
As a conlusion, equivariantly embedded test configuration is equivalent to 1-PS $\l: \C^* \to \GL(H^0(X, L^{\otimes r})^\vee)$. 
If moreover the generator $A_k$ is Hermitian with respect to the pull-back of a Hermitian inner product $H_k$ on $H^0(X, L^{\otimes k})$, we call $T_k$ Hermitian. For us the following existence theorem is important. 

\begin{thm}[\cite{Don05}]\label{equivariant embedding}
Given a test configuration and a Hermitian inner product $H_k$ on $H^0(X, L^{\otimes k})$, there exists a regular Hermitian isomorphism $T_k: H^0(\cX_0, \cL_0^{\otimes k}) \to H^0(X, L^{\otimes k})$. Moreover, the pull-back $T_k^*H_k$ is independent of $T_k$. For two regular Hermitian isomorphisms $T_k$ and $T_k'$ there exists a unitary endomorphism $U$ commutative with the generator $A_k$ such that $T_k' =T_k \circ U$ holds. 
\end{thm}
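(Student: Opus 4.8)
The plan is to reduce everything to the linear algebra of the $\C^*$-equivariant direct image bundle over $\C$ and a filtration it induces on $H^0(X, L^{\otimes k})$. Write $V := H^0(X, L^{\otimes k})$ and $W := H^0(\cX_0, \cL_0^{\otimes k})$, and let $\pi\colon \cX \to \C$ be the projection. For $k \gg 1$ the sheaf $E := \pi_* \cL^{\otimes k}$ is locally free of rank $N_k$ on $\C$ by flatness, and the $\C^*$-action makes it an equivariant bundle with fiber $W$ over $0$ and $V$ over $1$. The first step is to record the standard dictionary: an equivariant locally free sheaf on $\C$, for the standard scaling action, is the same data as a finite decreasing filtration $F^\bullet V$ of the generic fiber whose associated graded is canonically the central fiber; thus $W \simeq \bigoplus_w \mathrm{gr}^w_F V$, with the generator $A_k$ acting on the weight space $W_w$ by the weight $w$. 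Under this dictionary I would check that a \emph{regular} isomorphism $T_k$ — one fitting the equivariant-embedding diagram — is exactly a $\C^*$-equivariant trivialization of $E$, equivalently a filtered splitting of $F^\bullet$ inducing the identity on $\mathrm{gr}$; in linear terms, $T_k$ carries $\bigoplus_{w' \geq c} W_{w'}$ isomorphically onto $F^{\geq c} V$ for every $c$.

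Granting the dictionary, existence of a Hermitian regular $T_k$ is a Gram--Schmidt construction. Given $H_k$ on $V$, set
\begin{equation*}
V_w' := F^{\geq w} \cap (F^{> w})^{\perp},
\end{equation*}
the $H_k$-orthogonal complement of $F^{>w}$ inside $F^{\geq w}$. Then $V = \bigoplus_w V_w'$ is an $H_k$-orthogonal decomposition, and the projection restricts to an isomorphism $V_w' \xrightarrow{\sim} \mathrm{gr}^w_F V \simeq W_w$. Defining $T_k$ to be this isomorphism on each weight space produces a filtered splitting, hence a regular $T_k$, for which the weight spaces of $W$ are mutually orthogonal with respect to $T_k^* H_k$; this is precisely the statement that $A_k$ is Hermitian for $T_k^* H_k$.

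For the uniqueness claims I would show that any Hermitian regular $T_k$ is forced to satisfy $T_k(W_w) = V_w'$. Indeed the filtered-splitting property gives $T_k(W_w) \subseteq F^{\geq w}$ and $\bigoplus_{w' > w} T_k(W_{w'}) = F^{> w}$, while the Hermitian hypothesis makes $T_k(W_w)$ orthogonal to $\bigoplus_{w'>w} T_k(W_{w'}) = F^{>w}$; hence $T_k(W_w) \subseteq F^{\geq w} \cap (F^{>w})^{\perp} = V_w'$, and equality follows by counting dimensions. Consequently $T_k^* H_k$ is the pull-back of the fixed decomposition $\bigoplus_w H_k|_{V_w'}$ under the canonical identification $W \simeq \bigoplus_w V_w'$, so it is independent of $T_k$. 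Finally, for two Hermitian regular $T_k, T_k'$ the composite $U := T_k^{-1} \circ T_k'$ preserves each $W_w$, hence commutes with $A_k$, and preserves the common metric $T_k^* H_k = (T_k')^* H_k$, hence is unitary; this is the asserted factorization $T_k' = T_k \circ U$.

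The main obstacle is the first step, not the later linear algebra: I must verify carefully that the embedding-theoretic notion of regularity encoded in the commutative diagram coincides with the filtered-splitting condition, and that the central fiber really is the associated graded with $A_k$ acting by weights. This is where the flatness of $(\cX, \cL)$, the choice $k \gg 1$ ensuring local freeness of $E$, and the sign convention for $\lambda^\vee(\tau)$ (which fixes whether the filtration is increasing or decreasing) all enter; it is the only place where genuine geometry of the degeneration, rather than formal linear algebra, is used.
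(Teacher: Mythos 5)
The paper does not prove this statement---it is quoted from \cite{Don05}---so the only comparison available is with Donaldson's argument, and your proof is essentially that argument: the Rees dictionary between $\C^*$-equivariant locally free sheaves on $\C$ and filtrations of $H^0(X,L^{\otimes k})$ whose associated graded is $H^0(\cX_0,\cL_0^{\otimes k})$, the Gram--Schmidt splitting $V_w'=F^{\geq w}\cap(F^{>w})^{\perp}$, and the observation that Hermitianity of $A_k$ forces $T_k(W_w)=V_w'$. One point deserves emphasis because both uniqueness clauses hinge on it: knowing $T_k(W_w)=V_w'$ as subspaces is not by itself enough to conclude that $T_k^*H_k$ is independent of $T_k$, since two filtered splittings onto the same subspaces could still differ on $W_w$ by an arbitrary automorphism, pulling $H_k$ back to different metrics and making $U$ non-unitary; you also need the normalization, which you do build into your dictionary, that a regular $T_k$ induces the \emph{canonical} identification $W_w\simeq \mathrm{gr}^w_F V$ on graded pieces. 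Verifying that the commutative diagram defining regularity imposes exactly this normalization is therefore not a side remark but the load-bearing step, as you yourself flag; granted it, the remaining linear algebra is correct (and in fact then determines the Hermitian regular $T_k$ uniquely, so the final clause holds with $U=\id$, consistently with, though stronger than, the statement).
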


\subsection{Associated weak geodesic ray}

Here we briefly review the construction of the weak geodesic ray from a test configuration. Let $(\cX, \cL)$ be a test configuration with $\cX$ normal. We rotate the reference metric $\phi$ by the $\C^*$-action to adopt it as a boundary value of the Monge-Amp\`ere equation: 
\begin{equation}\label{MA}
(dd^c \Phi)^{n+1}=0 \ \ \ \text{on} \ \ \  \cX_{\abs{\tau}\leq 1}. 
\end{equation}
In \cite{Berm12} the $L^{\infty}$-solution was simply constructed as the Perron envelope of subsolutions. Regarding the computation of \cite{Sem92}, the associated $\phi^t(x):= \Phi(\l(e^t)x)$ $t \in [-\infty, 0]$ defines a weak geodesic ray emanating from $\phi$, where we have the canonical Mabuchi metric on the tangent space at $\phi$. It coincides with the preceding construction of Phong and Sturm. 

\begin{thm}[\cite{PS07a}, \cite{PS07b}, and \cite{PS10}]\label{PS}
Let $(\cX, \cL)$ be a test configuration and $\phi$ a reference metric. For any sufficiently divisible $k \geq 1$ there exists an equivariant embedding 
\begin{equation*}
I_k: (\cX, \cL^{\otimes k})  \to (\P(H^0(\cX_0, \cL_0^{\otimes k})^\vee)\times \C, \cO(1))
\end{equation*}
whose restriction on the central fiber gives the Kodaira embedding, and a basis $\{s_i\}_{i=1}^{N_k} \subset H^0(X, L^{\otimes k})$ such that the followings hold. 
\begin{itemize}
\item[$(i)$]
The basis is orthonormal for the $L^2$-Hermitian norm
\begin{equation*}
\norm{s}_{k\phi}^2 := \int_X \abs{s}^2 e^{-2k\phi} \omega_{\phi}^n. 
\end{equation*}
\item[$(ii)$]
The associated regular isomorphism $T_k: H^0(\cX_0, \cL_0^{\otimes k}) \to H^0(X, L^{\otimes k})$ is Hermitian in $\norm{\cdot }_{k\phi}^2$ and the trace-free part of the generator $A_k$ is diagonalized to $\diag(\l_1, \dots \l_{N_k})$ by the basis $\{T_k^*s_i\}_{i=1}^{N_k}$. 
\item[$(ii)$] 
Setting 
\begin{equation*}
\phi_k^t:= \frac{1}{2k}\log \sum_{i=1}^{N_k}e^{2\l_i t}\abs{s_i}^2, 
\end{equation*}
The upper-semicontinuous regularized limit of the supremum 
\begin{equation*}
\phi^t := (\lim_{r \to \infty} \sup_{k \geq r} \phi_k^t)^*
\end{equation*}
defines a weak geodesic ray of class $C^{1, \alpha}$ for any $0<\alpha<1$. The initial point is $\phi^0=\phi$. 
\end{itemize}

\end{thm}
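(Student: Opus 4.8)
The plan is to reduce everything to the already-cited existence theorems and to verify that the geometric objects assemble correctly. The backbone is the classical construction of Phong–Sturm, so the main task is to install the equivariant refinements $(i)$--$(iii)$ rather than to reprove convergence from scratch. I would organize the argument in four stages.

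First I would fix a sufficiently divisible $k$ and invoke Theorem \ref{equivariant embedding} to obtain a regular Hermitian isomorphism $T_k \colon H^0(\cX_0, \cL_0^{\otimes k}) \to H^0(X, L^{\otimes k})$, taking the input Hermitian inner product $H_k$ to be precisely the $L^2$-norm $\norm{\cdot}_{k\phi}^2$ determined by the reference metric $\phi$. Because $T_k$ is Hermitian for $H_k$, the generator $A_k$ is self-adjoint, hence diagonalizable by an $H_k$-orthonormal basis; applying the Gram--Schmidt process (which preserves orthonormality and can be done eigenspace by eigenspace) produces $\{s_i\}_{i=1}^{N_k}$ that is simultaneously orthonormal for $\norm{\cdot}_{k\phi}^2$ and diagonalizes the trace-free part of $A_k$ into $\diag(\l_1, \dots, \l_{N_k})$. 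This settles $(i)$ and $(ii)$ at the linear-algebra level, and the equivariant embedding $I_k$ is then the one canonically attached to $T_k$ by the construction in the preceding subsection.

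Second, with the basis in hand, the finite-level potentials
\begin{equation*}
\phi_k^t := \frac{1}{2k}\log \sum_{i=1}^{N_k} e^{2\l_i t}\abs{s_i}^2
\end{equation*}
are manifestly the Fubini--Study-type potentials pulled back along the rescaled embedding $I_k \circ \l(e^t)$; each is a genuine (smooth, strictly positive) metric on $L$ for fixed $t$, and by the Bergman-kernel asymptotics one has $\phi_k^0 \to \phi$ uniformly. The substantive analytic input is Theorem \ref{PS} itself in its original form: the upper-semicontinuous regularized $\limsup$
\begin{equation*}
\phi^t := \big(\lim_{r \to \infty} \sup_{k \geq r} \phi_k^t\big)^*
\end{equation*}
converges to the $L^\infty$-solution $\Phi$ of the homogeneous Monge--Amp\`ere equation \eqref{MA} after the change of variable $t = \log\abs{\tau}$, and the Phong--Sturm regularity theory upgrades the envelope solution to class $C^{1,\alpha}$ for every $0 < \alpha < 1$. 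Here I would simply cite \cite{PS07a,PS07b,PS10} for the $C^{1,\alpha}$ bound, since reproving it is out of scope.

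Third, I would check the two compatibility assertions: that $\phi^t$ is a weak geodesic ray and that $\phi^0 = \phi$. For the geodesic property I would use the identification, recorded above, that $\Phi$ solving $(dd^c\Phi)^{n+1}=0$ on $\cX_{\abs{\tau}\leq 1}$ is equivalent, via $\phi^t(x) = \Phi(\l(e^t)x)$, to the curve $t \mapsto \phi^t$ being a weak geodesic in the Mabuchi metric; this is exactly the Semmes--Donaldson dictionary cited from \cite{Sem92} and \cite{Berm12}. The initial condition $\phi^0 = \phi$ follows because we chose $\phi$ as the Dirichlet boundary value and because the $k$-level initial potentials $\phi_k^0$ converge to $\phi$ by the orthonormality in $(i)$ together with the standard partial-Bergman-kernel expansion. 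The main obstacle in the whole argument is ensuring that the single basis $\{s_i\}$ simultaneously achieves orthonormality, diagonalization of $A_k$, and the correct boundary limit, i.e.\ that the Hermitian structure produced by Theorem \ref{equivariant embedding} is genuinely compatible with the $L^2(\phi)$-geometry used to define $\phi_k^t$; once that compatibility is secured, statements $(i)$--$(iii)$ follow by combining the representation-theoretic normal form with the cited envelope and regularity results.
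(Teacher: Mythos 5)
Your proposal is correct and follows essentially the same route as the paper: claims $(i)$ and $(ii)$ are read off from Theorem \ref{equivariant embedding} applied with $H_k=\norm{\cdot}_{k\phi}^2$, and the convergence and $C^{1,\alpha}$-regularity of the envelope are delegated to \cite{PS07a}, \cite{PS07b}, \cite{PS10} together with the Semmes dictionary. The only substantive step the paper actually writes out, which you fold into the citations, is the identification of the Phong--Sturm limit $\Phi_{\PS}$ (a priori defined only on $\cX_{\tau\neq 0}$) with the bounded Perron solution $\Phi_{\mathrm{B}}$ of \eqref{MA}, obtained via the comparison principle $\Phi_{\FS,k}\leq\Phi_{\mathrm{B}}$ and uniqueness of the solution; this is what guarantees the correct boundary value $\phi^0=\phi$ and the geodesic property in the form used later in the paper.
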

The claim $(i)$ and $(ii)$ are the direct consequences of Theorem \ref{equivariant embedding}. In fact the above construction shows that 
\begin{equation*}
\phi_k^t(x) = \Phi_{\FS, k} (\l(e^t)x) 
\end{equation*}
is the pull-back of a particularly chosen Fubini-Study weight on $\cL$. If for a while we denote $\Phi_{\PS}$ as the metric corresponding to $\phi^t$ of Theorem \ref{PS}, it is at least defined on $\cX_{\tau\neq 0}$. Let $\Phi_{\mathrm{B}}$ be the bounded solution of  (\ref{MA}). Since $\Phi_{\FS, k}$ are defined over $\cX_{\abs{\tau}\leq 1}$ the comparison principle for the Monge-Amp\`ere operator implies 
$\Phi_{\FS, k}\leq \Phi_{\mathrm{B}}$ hence $\Phi_{\PS}\leq \Phi_{\mathrm{B}}$. Therefore $\Phi_{\PS}$ is upper bounded so extended to a plurisubharmonic weight of $\cL$ , which satisfies the degenerated Monge-Amp\`ere equation (\ref{MA}). Uniqueness of the solution now deduces $\Phi_{\PS} = \Phi_{\mathrm{B}}$.


\section{Quantized orthogonal projection}\label{quantized orthogonal projection}

\subsection{Proof of the main theorem}\label{proof of the main theorem}

We fix a reference metric $\omega=dd^c\phi$ in $c_1(L)$ to consider the $L^2$-norm 
\begin{equation}\label{L2norm}
\norm{s}^2_{k\phi} := \int_X \abs{s}^2 e^{-2k\phi} \omega^n. 
\end{equation}
It defines a sequence of Hermitian inner product on the finite dimensional vector space quantization guideline Bergman construction  
\begin{equation}\label{bergman}
\phi_k := \frac{1}{2k} \log \sum_{i=1}^{N_k} \abs{s_i}^2. 
\end{equation}

Let us first examine the case which corresponds to a product test configuration. For the convenience we set $s=(s_1, \dots s_{N_k})$ and define $s^*$ to be the adjoint. 

\begin{prop}\label{product case}
Assume that $\omega$ is invariant for the compact torus $S:=\ft_{\R}\otimes \S^1$. 
Let $v \in (\ft/\C)_\R$ and denote the trace-free part of the generator for some lift by $B_k \in \fsl(H^0(X, L^{\otimes k}))$. Take any orthonormal basis $\{s_i\}_{i=1}^{N_k} \subset H^0(X, L^{\otimes k})$ for $\norm{\cdot}_{k\phi}$. Then, the normalized complex Hamilton function is described as 
\begin{equation*}
u = \frac{s^* B_k s}{ks^*s} +O(k^{-1}\log k). 
\end{equation*}
\end{prop}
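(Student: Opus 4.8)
The plan is to read the right-hand side as the Berezin symbol of $B_k$ and to reduce the whole statement to the derivative asymptotics of the Bergman kernel. First I would observe that $\sigma_{B_k}:=\frac{s^*B_ks}{s^*s}$ depends only on $B_k$ and the metric, not on the chosen orthonormal basis, since it is the contraction of $B_k$ against the reproducing kernel $\sum_i|s_i|^2$. Because $\omega$ is $S$-invariant the induced compact action preserves $\norm{\cdot}_{k\phi}$, so $B_k$ is Hermitian and I may pass to a weight eigenbasis $B_ks_i=\lambda_is_i$ with $\lambda_i\in\R$; then $\sigma_{B_k}=\sum_i\lambda_i|s_i|^2/\sum_i|s_i|^2$, which is also the pull-back, under the Bergman embedding, of the Fubini--Study moment map of the diagonal $\C^*$-action with weights $\lambda_i$. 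The goal then reads $\sigma_{B_k}=ku+O(\log k)$.

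Next I would use the standard Kostant--Souriau description of the lifted infinitesimal action: on holomorphic sections of $L^{\otimes k}$ the generator is a first-order operator plus multiplication by the Hamiltonian, $B_k=\nabla_v+ku_1$, where $\nabla$ is the Chern connection of $e^{-2k\phi}$ and $u_1$ is a Hamilton function of $v$ for $\omega$ in the normalization forced by $\Tr B_k=0$. Writing $\beta_k:=\sum_i|s_i|^2e^{-2k\phi}$ for the Bergman density (the weight cancels in the above ratio), and pairing $B_ks_i=\lambda_is_i$ fibrewise with $s_i$ and summing, the multiplication term reproduces $ku_1\beta_k$ while the connection term assembles into $i_v\partial\beta_k$. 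This yields the exact pointwise identity
\begin{equation*}
\frac{1}{k}\sigma_{B_k}-u_1=\frac{1}{k}\,\frac{i_v\partial\beta_k}{\beta_k}=\frac{1}{k}\,v(\log\beta_k),
\end{equation*}
whose right-hand side, since $\log\beta_k=2k(\phi_k-\phi)$, is a constant multiple of $v(\phi_k-\phi)$, the $v$-derivative of the Bergman potential difference.

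It then remains to control two errors. The additive constant relating the two normalizations, $u_1=u+c_k$, I would pin down by integrating against $\omega^n$: from $\int_X\sigma_{B_k}\beta_k\,\omega^n=\sum_i\lambda_i=0$ together with the Tian--Yau--Zelditch expansion $\beta_k=\frac{k^n}{V}(1+O(k^{-1}))$ one obtains $c_k=O(k^{-1})$. For the main term I would invoke the $C^1$ form of the same expansion: since the leading coefficient of $\beta_k$ is constant, $v(\log\beta_k)=v(\beta_k)/\beta_k$ is of lower order, giving $v(\phi_k-\phi)=O(k^{-1}\log k)$. Combining the two bounds produces the asserted $u=\frac{s^*B_ks}{ks^*s}+O(k^{-1}\log k)$.

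The main obstacle is precisely this last estimate: bounding $v(\phi_k-\phi)$ requires a derivative, rather than merely $C^0$, version of the Bergman kernel asymptotics, uniform in $x$, and it is exactly this derivative (equivalently, off-diagonal) control that is responsible for the logarithmic loss --- the pointwise $C^0$ expansion alone would not bound the $v$-derivative. A secondary, routine but delicate, point is the bookkeeping of the constant $c_k$, which must be shown to enter only at order $O(k^{-1})$ so as not to degrade the stated rate.
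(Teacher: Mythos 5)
Your argument is correct in substance but follows a genuinely different route from the paper's. The paper first reduces by linearity to rational $v$ coming from a one-parameter subgroup, writes $u-c=\frac{d}{dt}\big|_{t=0}\exp(tv)^*\phi$, compares this with the explicitly computable derivative $\frac{d}{dt}\big|_{t=0}\exp(tv)^*\phi_k=\frac{s^*B_k's}{ks^*s}$ by means of the two-sided $C^0$ bound $\phi_k-Ck^{-1}\log k\leq\phi\leq\phi_k+Ck^{-1}$ (this is where the $\log k$ enters), and then fixes the additive constant by the equivariant Chern--Weil identity $\Tr B_k'/(kN_k)\to V^{-1}\int_X(u-c)\,\omega^n$. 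You instead work with the exact Kostant--Souriau/Berezin identity $\frac{1}{k}\sigma_{B_k}-u_1=\frac{1}{k}v(\log\beta_k)$ and reduce everything to the $C^1$ asymptotics of the Bergman density, fixing the constant by trace-freeness together with the Tian--Yau--Zelditch expansion. Your route buys two things: it needs no reduction to rational $v$, and it replaces the paper's somewhat delicate step of differentiating a $C^0$ inequality in $t$ (which implicitly relies on convexity of both potentials along the flow, as in Lemma \ref{Lp}) by an exact pointwise identity. The price is that you must invoke the $C^1$ rather than merely $C^0$ Bergman kernel expansion; for a smooth positively curved $\phi$ this is standard (Zelditch, Catlin, Ma--Marinescu) and in fact gives $v(\log\beta_k)=O(k^{-1})$ with no logarithm, so your bound is actually sharper than the stated $O(k^{-1}\log k)$ --- the logarithm in the paper's estimate comes from the cruder $C^0$ Demailly-type approximation, not from derivative control as you suggest. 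Two small points to tidy: the precise form of the Kostant--Souriau lift (signs, factors of $2$, and the fact that the paper defines the generator via $-v$ on the dual representation) should be checked against the paper's conventions, and your constant bookkeeping as written yields $c_k=O(k^{-1}\log k)$ rather than $O(k^{-1})$ unless you also expand $\int_X v(\log\beta_k)\,\beta_k\,\omega^n$; either bound suffices for the stated error term.
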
 
\begin{proof}
Since the identity is linear both in $u$ and $B_k$ we may assume that $v\in (\ft/\C)_\Q$, namely, it derives from a 1-PS $\mu: \C^* \to T$. The Hamiltonian property $\i \bar{\partial} u = i_v \omega$ deduces 
\begin{equation}\label{h}
u= \frac{d}{dt}\bigg\vert_{t=0} \exp (t v)^*\phi +c,  
\end{equation}
for some constant $c \in \R$. We need to note that $\phi$ is a weight only locally defined on some trivialization patch. If one takes the trivialization in an equivariant way, it can be checked that the weight of the pull-backed fiber metric is written as 
\begin{equation}\label{equivariant trivialization}
\exp (t v)^*\phi + \log \abs{\chi (e^t)}, 
\end{equation}
with a character $\chi$. By the differentiation, however, the transition rule is canceled and hence the right-hand side of (\ref{h}) is globally defined. Let us take the Bergman weight. 
By Demailly's approximation theorem, there exists a positive constant $C$ such that 
\begin{equation}\label{TYZ}
\phi_k -Ck^{-1}\log k \leq \phi \leq \phi_k + Ck^{-1} 
\end{equation}
holds for any $k \geq 1$. Noting (\ref{action}) and (\ref{equivariant trivialization}) we observe that the generator $B'_k$ satisfies  
\begin{equation*}
 \exp (t v)^*\phi_k = \mu(e^{t})^*\phi_k = \frac{1}{2k}\log \sum_{i=1}^{N_k} \abs{\exp(tB'_k)s_i}^2. 
\end{equation*}
The differentiation then yields  
\begin{equation*}
u-c =  \frac{s^* B'_k s}{ks^*s} +O(k^{-1}\log k). 
\end{equation*}
To determine the constant $c$, we choose the basis so that $\mu(\tau)s_i = \tau^{-\mu'_i}s_i$ holds. Then the trace-free part $B_k= \diag(\mu_1, \dots, \mu_{N_k})$ can be compared with $B'_k$ as 
\begin{equation*}
\frac{s^* B'_k s}{ks^*s} = \frac{\sum \mu'_i \abs{s_i}^2}{k\sum \abs{s_i}^2} = \frac{\sum \mu_i \abs{s_i}^2}{k\sum \abs{s_i}^2} +\frac{\Tr B'_k}{kN_k }.   
\end{equation*}
Thus the proof is concluded with 
\begin{equation*}
\frac{\Tr B'_k}{kN_k} \to \frac{1}{V}\int_X (u-c) \omega^n, 
\end{equation*}
which is a consequence of the equivariant Chern-Weil theory (\cite{AB84}). (It also follows from \cite{His12} combined with \cite{WN10}'s computation of the tangent for a product test configuration.) 
\end{proof} 

Let us go to the general case. Our proof is based on the following key lemma. 
\begin{lem}\label{Lp}
Let $\phi^t$ be a weak geodesic ray with $\phi^0=\phi$, associated with a test configuration and $\phi^t_k$ be the Bergman geodesic ray of Phong-Sturm. Then $L^p$-convergence $\dot{\phi}_k^0 \to \dot{\phi}^0$ holds for any $p\geq1$. 
\end{lem}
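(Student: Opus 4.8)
The plan is to prove the stronger $L^1$-statement first and then bootstrap to every $p$, using that along both rays the time-derivative at $0$ is a weighted average of the normalized weights.

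\emph{Reduction to $L^1$.} First I would record the exact shape of $\dot\phi_k^0$: differentiating $\phi_k^t=\frac1{2k}\log\sum_i e^{2\lambda_i t}|s_i|^2$ at $t=0$ gives $\dot\phi_k^0=\frac{s^*A_ks}{k\,s^*s}=\sum_i\frac{\lambda_i}{k}\,\frac{|s_i|^2}{\sum_j|s_j|^2}$, a convex combination of the normalized weights $\lambda_i/k$. Since these lie in a fixed bounded interval (the support of the weight distribution of the fixed test configuration), one has $\sup_k\|\dot\phi_k^0\|_{L^\infty}\leq C$, and the same bound passes to $\dot\phi^0$. With this uniform bound $\int_X|\dot\phi_k^0-\dot\phi^0|^p\omega^n\leq(2C)^{p-1}\int_X|\dot\phi_k^0-\dot\phi^0|\omega^n$, so it suffices to establish $L^1$-convergence.

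\emph{Pointwise lower bound.} Both rays are convex in $t$: $\phi_k^t$ because $\log\sum e^{2\lambda_i t}|s_i|^2$ is a log-sum-exponential, and $\phi^t$ because it is a weak geodesic. Hence the left derivative at $0$ is the supremal secant slope, $\dot\phi^0=\sup_{t<0}\frac{\phi^0-\phi^t}{-t}$, and likewise for $\phi_k$. Now I invoke the comparison established in the discussion of Theorem \ref{PS}, namely $\Phi_{\FS,k}\leq\Phi_{\mathrm B}=\Phi_{\PS}$, which after pulling back by $\lambda(e^t)$ reads $\phi_k^t\leq\phi^t$ for every $t\leq0$, together with the uniform convergence $\phi_k^0\to\phi^0$ from the Demailly estimate \eqref{TYZ}. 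For fixed $t<0$, convexity gives $\dot\phi_k^0\geq\frac{\phi_k^0-\phi_k^t}{-t}\geq\frac{\phi_k^0-\phi^t}{-t}$, so $\liminf_k\dot\phi_k^0\geq\frac{\phi^0-\phi^t}{-t}$ pointwise; taking the supremum over $t<0$ yields $\liminf_k\dot\phi_k^0\geq\dot\phi^0$ everywhere. By Fatou's lemma (the integrands are bounded below) this already forces $\liminf_k\frac1V\int_X\dot\phi_k^0\omega^n\geq\frac1V\int_X\dot\phi^0\omega^n$.

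\emph{Matching the means (the crux).} It remains to prove the reverse inequality $\limsup_k\frac1V\int_X\dot\phi_k^0\omega^n\leq\frac1V\int_X\dot\phi^0\omega^n$, after which the means converge. Here the comparison $\phi_k^t\leq\phi^t$ and endpoint convergence alone are provably insufficient: for real convex functions one can arrange $e_k\leq e$ with $e_k(0)\to e(0)$ yet $e_k'(0^-)\not\to e'(0^-)$, so genuine analytic input near $t=0$ is needed. The idea is to read both means off the Monge--Amp\`ere energy $E$. Since $\omega_{\phi^0}=\omega$ and $t\mapsto E(\phi^t)$ is affine along the weak geodesic, $\frac1V\int_X\dot\phi^0\omega^n=\frac{d}{dt}\big|_{0^-}E(\phi^t)=E^{\NA}(\cX,\cL)$, a fixed intersection number. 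For the Bergman ray $\frac{d}{dt}\big|_{0^-}E(\phi_k^t)=\frac1V\int_X\dot\phi_k^0(dd^c\phi_k)^n$, which differs from $\frac1V\int_X\dot\phi_k^0\omega^n$ by $\frac1V\int_X\dot\phi_k^0\big(\omega^n-(dd^c\phi_k)^n\big)$; integrating by parts and using $\|\phi_k-\phi\|_{L^\infty}=O(k^{-1}\log k)$ from \eqref{TYZ} together with a Bergman-kernel bound on the mixed masses of $dd^c\dot\phi_k^0$, this discrepancy tends to $0$. The remaining point, $\frac{d}{dt}\big|_{0^-}E(\phi_k^t)\to\frac{d}{dt}\big|_{0^-}E(\phi^t)$, is where the work concentrates: it should follow from Bedford--Taylor continuity of $(dd^c\cdot)^n$ under the decreasing envelopes $\sup_{j\geq k}\phi_j^t\downarrow\phi^t$ and the $C^0$-convergence $\phi_k\to\phi$, upgraded by a uniform-in-$k$ control of the energy slope as $t\to0^-$, in the spirit of the Bergman-kernel asymptotics of \cite{His12}. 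I expect this uniform near-endpoint estimate to be the main obstacle.

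\emph{Conclusion.} With $\int_X\dot\phi_k^0\omega^n\to\int_X\dot\phi^0\omega^n$ in hand, set $h_k:=\dot\phi_k^0-\dot\phi^0$. The lower bound gives $\liminf_k h_k\geq0$ a.e., so the negative parts $h_k^-$ tend to $0$ a.e. and, being uniformly bounded, $\int_X h_k^-\omega^n\to0$ by dominated convergence. Since $\int_X h_k\omega^n\to0$, also $\int_X h_k^+\omega^n\to0$, whence $\int_X|h_k|\omega^n=\int_X(h_k^++h_k^-)\omega^n\to0$. This is the desired $L^1$-convergence, and by the first step it promotes to $L^p$-convergence for every $p\geq1$.
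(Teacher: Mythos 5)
Your proof has exactly the skeleton of the paper's: (i) a one-sided pointwise comparison of the endpoint derivatives, obtained from convexity in $t$, the comparison/maximality principle $\Phi_{\FS,k}\leq\Phi_{\mathrm{B}}$, and the Bergman-kernel estimate (\ref{TYZ}); (ii) convergence of the means $\frac1V\int_X\dot\phi_k^0\,\omega^n\to\frac1V\int_X\dot\phi^0\,\omega^n$; (iii) the elementary measure-theoretic step combining the one-sided a.e.\ bound with the convergence of integrals to get $L^1$-convergence; (iv) the bootstrap to $L^p$ from the uniform $L^\infty$-bound on $\dot\phi_k^0$ (Lemma 3.1 of \cite{PS07a}). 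Steps (i), (iii), (iv) are carried out correctly. The only cosmetic discrepancy is the orientation in (i): you parametrize the ray on $t\leq0$ and obtain $\liminf_k\dot\phi_k^0\geq\dot\phi^0$, while the paper works with $t>0$ and concludes $\limsup_k\dot\phi_k^0\leq\dot\phi^0$; these are the same statement under $t\mapsto-t$, and either one feeds into (iii).

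The one place your argument is not closed is step (ii), which you correctly identify as the crux (your one-dimensional example showing that $\phi_k^t\leq\phi^t$ together with endpoint convergence cannot by itself force convergence of the endpoint derivative is a good observation, and explains why this extra input is genuinely needed). The paper does not reprove (ii) either: it invokes it as a very special case of the spectral-measure convergence theorem of \cite{His12}, namely $\lim_k\Tr A_k/(kN_k)=\frac1V\int_X\dot\phi^0\,\omega^n$, the first moment of the weight distribution. Your proposed route through the Monge--Amp\`ere energy --- affineness of $t\mapsto E(\phi^t)$ along the weak geodesic with slope the non-Archimedean energy, and convergence of the Bergman slopes to it --- is essentially how that result is established in \cite{His12} and \cite{BHJ16}, so the strategy is sound, but as written the uniform near-endpoint control you flag remains an unproved estimate. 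The fix is simply to cite \cite{His12} at that point, after which your proof and the paper's coincide.
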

\begin{proof}
First we show the $L^1$-convergence. We recall the inequality 
\begin{equation*}
\phi_k^0 -Ck^{-1}\log k \leq \phi^0  \leq \phi_k^0 + Ck^{-1} 
\end{equation*}
again. By the maximality of the solution of the Monge-Amp\`ere equation the left-hand side yields 
$\phi_k^t -Ck^{-1}\log k \leq \phi^t$ for any $t >0$ so that we derive  
\begin{equation*}
\phi^t -\phi^0 \geq \phi_k^t -\phi^0 -C k^{-1}\log k \geq \phi_k^t -\phi_k^0 -Ck^{-1}(1+\log k). 
\end{equation*}
Since both $\phi^t$ and $\phi_k^t$ are convex in $ t$, it follows that 
\begin{equation*}
\dot{\phi}^t \geq \dot{\phi}_k^0 - C\frac{k^{-1}(1+\log k)}{t}. 
\end{equation*} 
Letting $k \to \infty$ first, one has $\dot{\phi}^t \geq \limsup_{k \to \infty} \dot{\phi}_k^0$ and then $\dot{\phi}^0 \geq \limsup_{k \to \infty} \dot{\phi}_k^0$ follows from the $C^{1, \a}$-regularity of the weak geodesic ray.  
On the other hand, the convergence of the integral 
\begin{equation*}
\int_X \dot{\phi}_k^0 \omega^n \to \int_X  \dot{\phi}^0 \omega^n 
\end{equation*}
is well-known fact ({\em e.g.} a very special case of Theorem \cite{His12}). It is then elementary to conclude the $L^1$-convergence by argument of measure theory. Actually one can apply Lemma $2.2$ of \cite{Berm06} directly to our situation. 

Since $\dot{\phi}^0$ and $\dot{\phi}_k^0$ are uniformly bounded (by Lemma $3.1$ of \cite{PS07a}), we may choose a constant $M_p$ such that $\abs{\dot{\phi}^0- \dot{\phi}_k^0}^{p-1} \leq M_p$ holds. Therefore the $L^p$-convergence follows from the estimate: 
\begin{equation*}
\int_X\abs{\dot{\phi}^0- \dot{\phi}_k^0}^p \omega^n = \int_X \abs{\dot{\phi}^0- \dot{\phi}_k^0}^{p-1} \abs{\dot{\phi}^0- \dot{\phi}_k^0}\omega^n
\leq M_p \int_X\abs{\dot{\phi}^0- \dot{\phi}_k^0} \omega^n. 
\end{equation*} 

\end{proof}

We elaborate on the idea of \cite{Don05} and \cite{PS07a} to relate the tangent vector of the weak geodesic ray to endomorphisms. Notice that it gives another proof of the main theorem in \cite{His12} combining with the above lemma.  

\begin{prop}\label{Bergman level}
For any weak geodesic ray $\phi^t$ associated with a test configuration we have 
\begin{equation*}
\frac{1}{V}\int_X (\dot{\phi}_k^0)^p \omega^n = \frac{\Tr (A_k)^p}{k^pN_k} +O(k^{-1}). 
\end{equation*}
Moreover, if the test configuration is $T$-equivariant any $v\in (\ft/\C)_\R$ induces the the trace-free generator $B_k \in  \fsl(H^0(\cX_0, \cL^{\otimes k}_0))$ on the dual such that 
\begin{equation*}
\frac{1}{V}\int_X \dot{\phi}_k^0 u_v \omega^n = \frac{\Tr A_k B_k}{k^2N_k} +O(k^{-1}\log k)
\end{equation*}
holds. 
\end{prop}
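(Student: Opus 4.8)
The plan is to reduce both identities to the product-case analysis of Proposition \ref{product case} together with the Bergman geodesic description in Theorem \ref{PS}, exploiting the fact that along a Bergman geodesic the endomorphism $A_k$ is constant (it is the generator of the underlying $\C^*$-action) and diagonal in the chosen orthonormal basis. The key observation is that Proposition \ref{product case} was proved by differentiating the pulled-back Bergman weight at $t=0$; the very same computation, read for the Phong--Sturm ray $\phi^t_k = \frac{1}{2k}\log\sum_i e^{2\l_i t}\abs{s_i}^2$, identifies the tangent $\dot{\phi}^0_k$ explicitly.

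\medskip

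First I would differentiate $\phi^t_k$ in $t$ at $t=0$. Writing the weights as in Theorem \ref{PS}(iii) and using that $\{s_i\}$ is $\norm{\cdot}_{k\phi}$-orthonormal with $A_k=\diag(\l_1,\dots,\l_{N_k})$, a direct computation gives
\begin{equation*}
\dot{\phi}^0_k = \frac{1}{k}\,\frac{\sum_i \l_i\abs{s_i}^2}{\sum_i\abs{s_i}^2} = \frac{1}{k}\,\frac{s^* A_k s}{s^*s}.
\end{equation*}
This is the pointwise Bergman-level incarnation of the tangent vector, and it is the analogue for $A_k$ of the formula $u = s^*B_k s/(k\,s^*s)+O(k^{-1}\log k)$ from Proposition \ref{product case}. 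The task then becomes to integrate the $p$-th power of this quotient against $\omega^n$ and match it with $\Tr(A_k)^p/(k^pN_k)$.

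\medskip

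For the first identity I would expand $(\dot{\phi}^0_k)^p = k^{-p}\big(s^*A_k s/s^*s\big)^p$ and integrate. The main analytic input is the asymptotic expansion of the Bergman kernel: the density $\sum_i\abs{s_i}^2 e^{-2k\phi}$ equals $k^n/V \cdot (\text{Bergman density})$ and concentrates so that, after normalization by $N_k\sim V k^n/n!$, the integral $\frac{1}{V}\int_X (s^*A_k s/s^*s)^p\,\omega^n$ becomes a weighted average of the eigenvalues $\l_i$ that, to leading order, reproduces the normalized trace $\Tr(A_k)^p/N_k$ with an error controlled by the off-diagonal decay of the kernel. Concretely I expect to use that the near-diagonal behaviour makes the quotient close to the "diagonal" value, so that cross terms $\l_{i_1}\cdots\l_{i_p}\abs{s_{i_1}}^2\cdots$ with repeated indices dominate and yield $\sum_i\l_i^p$ up to $O(k^{-1})$; the uniform boundedness of $\dot{\phi}^0_k$ (invoked already in Lemma \ref{Lp}) keeps all remainders under control. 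The second identity is entirely parallel: combine the formula for $\dot{\phi}^0_k$ above with the description $u_v = s^*B_k s/(k\,s^*s)+O(k^{-1}\log k)$ of Proposition \ref{product case}, multiply, integrate, and read off $\Tr(A_k B_k)/(k^2 N_k)$, the extra $\log k$ in the error being inherited directly from Proposition \ref{product case}.

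\medskip

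The step I expect to be the main obstacle is making rigorous the passage from the integral of the \emph{normalized quotient} $(s^*A_k s/s^*s)^p$ to the \emph{normalized trace} $\Tr(A_k)^p/N_k$ with the claimed $O(k^{-1})$ error. This requires more than pointwise concentration of the Bergman kernel: one must control the higher moments of the off-diagonal kernel coefficients uniformly in the (large and unboundedly growing) eigenvalues $\l_i$, and ensure that the weight factors $\abs{s_i}^2$ genuinely behave like a partition of unity after integration. I would handle this by an induction on $p$, at each stage peeling off one factor $s^*A_k s/s^*s$ and estimating the defect against the Bergman asymptotics of Demailly--Tian--Yau--Zelditch type already used in \eqref{TYZ}, so that the algebraic identity $\Tr A_k^p = \sum_i \l_i^p$ emerges in the limit with the desired polynomial-in-$k^{-1}$ remainder.
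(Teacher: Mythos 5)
Your proposal is correct and follows essentially the same route as the paper: identify $\dot{\phi}_k^0 = s^*A_k s/(k\,s^*s)$ from the Phong--Sturm construction, import $u_v = s^*B_k s/(k\,s^*s)+O(k^{-1}\log k)$ from Proposition \ref{product case}, and convert the resulting integrals into normalized traces via the Tian--Zelditch--Catlin expansion. The ``main obstacle'' you flag is exactly the step the paper packages as the matrix $M_{k,ij}=\frac{1}{V}\int_X s_i\bar{s_j}(\sum_i\abs{s_i}^2)^{-1}\omega^n$, whose asymptotic $n!k^{-n}\delta_{ij}+O(k^{-n-1})$ is read off from the Bergman kernel expansion, so no genuinely different idea is involved.
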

\begin{proof}
We prove the second formula. The first one is much easier to show by the same argument. Let us take an equivariant embedding of the test configuration so that  $A_k$ and $B_k$ are identified with the endomorphisms on $H^0(X, L^{\otimes k})$. We may further assume that $A_k$ is Hermitian with respect to $\norm{\cdot}_{k\phi}$. Therefore it can be unitarily diagonalizable by some orthonormal basis $\{s_i\}_{i=1}^{N_k}$. For the basis, the tangent of Phong-Sturm's Bergman construction is computed as 
\begin{equation*}
\dot{\phi}_k^0 = \frac{s^* A_k s}{ks^*s}. 
\end{equation*}
On the other hand, since the test configuration is $T$-equivariant the above $B_k$ is equivalent to what directly induced by $v$ hence we may apply Proposition \ref{product case} to have 
\begin{equation*}
u_v= \frac{s^* B_k s}{ks^*s} +O(k^{-1}\log k). 
\end{equation*}
Setting 
\begin{equation*}
M_{k, ij} := \frac{1}{V} \int_X \frac{s_i\bar{s_j}}{\sum\abs{s_i}^2} \omega^n, 
\end{equation*}
we combine the two in the form:    
\begin{equation*}
\frac{1}{V}\int_X \dot{\phi}_k^0 u_v \omega^n = \frac{\Tr A_kB_k M_k}{k^2} +O(k^{-1}\log k). 
\end{equation*}
Asymptotic of the matrices $M_k$ is computed by Tian-Zelditch-Catlin expansion for the Bergman kernel as follows: 
\begin{align*}
V M_{k, ij} &= \int_X s_i \bar{s_j} e^{-2k\phi_k} \omega^n = \int_X s_i \bar{s_j} e^{-2k\phi+(2k\phi-2k\phi_k)}  \omega^n \\
&= n! k^{-n}\d_{ij} -nk^{n-1} \int_X s_i \bar{s_j} S_{\omega} e^{-2k\phi} \omega^n + O(k^{-n-2}). 
\end{align*}
Here $S_{\omega}$ is the scalar curvature of the K\"ahler metric $\omega$. 

\end{proof} 

\begin{rem}
For general two weak geodesic rays (each of which is associated with a different test configuration), the corresponding generators are not simultaneously diagonalizable hence one cannot describe them in a common orthonormal basis. Also, we fixed in the proof a very special equivariant embedding of a test configuration to identify $A_k$ with a Hermitian operator on $H^0(X, L^{\otimes k})$. To define everything intrinsically on the central fiber, therefore, we need to assume the $T$-equivariance. 
\end{rem} 

Now we fix a $T$-equivariant test configuration $(\cX, \cL)$ to consider the dual  representation $\ft/\C \to \fsl(H^0(\cX_0, \cL_0))$. The image which we denote abuse of notation by $\ft/\C$ then identified with the set of trace-free generators for vector fields. The canonical Killing form over $\fsl(H^0(\cX_0, \cL_0))$ is defined by taking trace of the two endomorphisms' composition and it is non-degenerate. It therefore defines the orthogonal projection $P_k: \fsl(H^0(\cX_0, \cL_0)) \to (\ft/\C)_\R$. Notice that non-degeneracy is not the case for $\ft/\C$.  

\begin{prop}
Let us fix an equivariant embedding of a $T$-equivariant test configuration so that the generator $A_k$ is identified with the operator on $H^0(X, L^{\otimes k})$. Let $\{s_i\}_{i=1}^{N_k}$ be an orthonomal basis which diagonalizes $A_k$. Then the function 
\begin{equation*}
f_k := \frac{s^* P_k(A_k) s}{ks^*s}   
\end{equation*}
converges to $P(\dot{\phi}^0)$ in the $L^p$-space, for any $p\geq1$. 
\end{prop}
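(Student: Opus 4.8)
The overall plan is to push the entire comparison onto the fixed, finite-dimensional space $(\ft/\C)_\R\cong\Ham_0(\ft,\omega)$, where orthogonal projection is nothing but the solution of a linear system and where all $L^p$-norms are equivalent. Under the identification of $\ft/\C$ with trace-free generators, $P_k(A_k)\in(\ft/\C)_\R$ is itself the generator of the field $v_k:=P_k(A_k)$, so Proposition \ref{product case} applied with $B_k=P_k(A_k)$ and the given orthonormal basis yields
\[
f_k=\frac{s^*P_k(A_k)s}{ks^*s}=u_{v_k}+O(k^{-1}\log k)
\]
uniformly on $X$, where $u_{v_k}\in\Ham_0(\ft,\omega)$ is the normalized Hamiltonian of $v_k$. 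Thus everything reduces to showing $v_k\to v_\infty$ in $(\ft/\C)_\R$ for a limit $v_\infty$ with $u_{v_\infty}=P(\dot{\phi}^0)$: once $v_k$ is known to converge it lies in a compact set, the error above tends to $0$ in every $L^p$, and finite-dimensionality of $\Ham_0(\ft,\omega)$ turns the resulting $L^2$-convergence $u_{v_k}\to u_{v_\infty}$ into $L^p$-convergence for all $p$.

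To produce the limit I would equip $(\ft/\C)_\R$ with two inner products, the rescaled Killing form $\inn{v,w}_k:=\Tr(B_k(v)B_k(w))/(k^2N_k)$ and the $L^2$-form $\inn{v,w}_\infty:=\frac1V\int_X u_vu_w\,\omega^n$, where $B_k(\cdot)$ denotes the trace-free generator. Both are genuine inner products on the fixed space: $v\mapsto u_v$ is injective since $u_v\equiv0$ forces $i_v\omega=\i\bar\partial u_v=0$, whence $v=0$, so $\inn{\cdot,\cdot}_\infty$ is positive definite; and for real $v$ the generator $B_k(v)$ is Hermitian, so $\inn{v,v}_k=\Tr(B_k(v)^2)/(k^2N_k)>0$ unless $v=0$. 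Rerunning verbatim the computation of Proposition \ref{Bergman level} — replacing the Bergman tangent $s^*A_ks/(ks^*s)$ by $s^*B_k(w)s/(ks^*s)=u_w+O(k^{-1}\log k)$ via Proposition \ref{product case}, and inserting the Tian--Zelditch--Catlin expansion of $M_k$ — gives $\inn{v,w}_k\to\inn{v,w}_\infty$ for each fixed pair. As this is the convergence of the finitely many entries of the Gram matrices $G_k$ in a fixed basis, $G_k\to G_\infty$ with $G_\infty$ invertible.

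The defining property of the trace-form projection, $\Tr\big((A_k-P_k(A_k))B_k(w)\big)=0$ for all $w$, becomes after division by $k^2N_k$ the identity $\inn{v_k,w}_k=\ell_k(w)$, where $\ell_k(w):=\Tr(A_kB_k(w))/(k^2N_k)$. By Proposition \ref{Bergman level} together with the $L^2$-convergence $\dot{\phi}_k^0\to\dot{\phi}^0$ of Lemma \ref{Lp}, one has $\ell_k(w)=\frac1V\int_X\dot{\phi}_k^0u_w\,\omega^n+O(k^{-1}\log k)\to\frac1V\int_X\dot{\phi}^0u_w\,\omega^n=:\ell_\infty(w)$ for each $w$. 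Expressing the projection as $v_k=G_k^{-1}\ell_k$ in the fixed basis, the convergences $G_k\to G_\infty$ (invertible) and $\ell_k\to\ell_\infty$ force $v_k\to v_\infty:=G_\infty^{-1}\ell_\infty$; and $v_\infty$ is characterized by $\inn{v_\infty,w}_\infty=\ell_\infty(w)$ for all $w$, i.e. $\frac1V\int_X u_{v_\infty}u_w\,\omega^n=\frac1V\int_X\dot{\phi}^0u_w\,\omega^n$ for every $u_w\in\Ham_0(\ft,\omega)$, which says precisely $u_{v_\infty}=P(\dot{\phi}^0)$. Combined with the first paragraph this gives $f_k\to P(\dot{\phi}^0)$ in $L^p$ for all $p\ge1$.

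The main obstacle I anticipate is the asymptotic matching $\inn{\cdot,\cdot}_k\to\inn{\cdot,\cdot}_\infty$: although it is ``the same computation'' as Proposition \ref{Bergman level}, it must be carried out for pairs of distinct vector fields and relies on the Hermitian normalization of the generators and on the off-diagonal control of $M_k$ from the Bergman kernel expansion. Equally essential is the nondegeneracy of the limiting form $\inn{\cdot,\cdot}_\infty$, without which the inversion $G_k^{-1}\to G_\infty^{-1}$ — and hence the continuity of the projection in the limit — would fail.
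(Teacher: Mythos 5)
Your proof is correct, and it rests on the same two pillars as the paper's argument --- Proposition \ref{Bergman level}, which converts trace pairings into $L^2$ pairings up to $O(k^{-1}\log k)$, and the $L^p$-convergence $\dot{\phi}_k^0\to\dot{\phi}^0$ of Lemma \ref{Lp} --- but the packaging is genuinely different. The paper stays at the level of functions: it reduces to $L^2$ by uniform boundedness and expands $\int_X(f_k-P(\dot{\phi}^0))^2\omega^n$, showing separately that $\norm{f_k}_2^2$ and the cross term $\frac1V\int_X f_kP(\dot{\phi}^0)\omega^n$ both converge to $\norm{P(\dot{\phi}^0)}_2^2$ by means of an $L^2$-orthonormal basis of $\Ham_0(\ft,\omega)$ and a ``linearity'' step. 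You instead work inside the fixed finite-dimensional space $(\ft/\C)_\R$, proving that the rescaled Killing forms converge as Gram matrices to the (nondegenerate) Futaki--Mabuchi $L^2$ form and that the linear functionals $\Tr(A_kB_k(\cdot))/(k^2N_k)$ converge, so that the projected vectors $v_k=P_k(A_k)$ converge to the vector whose Hamiltonian is $P(\dot{\phi}^0)$. What your route buys is that it makes explicit exactly what the paper's ``linearity yields'' step quietly uses: to pass from $\lim_k\Tr(A_kB_{j,k})/(k^2N_k)=\frac1V\int_X\dot{\phi}^0u_j\omega^n$ to $\lim_k\Tr P_k(A_k)^2/(k^2N_k)=\norm{P(\dot{\phi}^0)}_2^2$ one needs the trace-form projection coefficients to converge to the $L^2$-projection coefficients, which is precisely your statement that $G_k\to G_\infty$ with $G_\infty$ invertible. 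The one delicate point, which you flag correctly, is that $P_k(A_k)$ is a $k$-dependent element of $(\ft/\C)_\R$, so the error term in Proposition \ref{product case} must be controlled uniformly on a compact set of vector fields; since that error is homogeneous in $v$ and you obtain boundedness of $v_k$ from the Gram-matrix argument, which is independent of the expansion of $f_k$, there is no circularity and the argument closes.
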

\begin{proof}
Since $f_k$ and $P(\dot{\phi}^0)$ are uniformly bounded, it is enough to show $L^2$-convergence. As in the proof of the previous proposition, we have 
\begin{itemize}
\item[$(i)$]
\begin{equation*}
\frac{1}{V}\int_X (f_k)^2 \omega^n = \frac{\Tr P_k(A_k)^2}{k^2N_k} +O(k^{-1}) \ \ \ \text{and}
\end{equation*}
\item[$(ii)$]
\begin{equation*}
\frac{1}{V}\int_X f_ku \omega^n = \frac{\Tr P_k(A_k)B_k}{k^2N_k} +O(k^{-1}\log k), 
\end{equation*}
\end{itemize}
for any $u \in \Ham_0(\ft, \omega)$. Let us take an $L^2$-orthonomal basis $u_j$ $(1 \leq j \leq d)$ of $\Ham_0(\ft, \omega)$. By the definition of $L^2$-orthogonal projection we immediately have 
\begin{equation*}
\frac{1}{V}\int_X P(\dot{\phi}^0)^2 \omega^n = \sum_{j=1}^m \bigg(\frac{1}{V}\int_X \dot{\phi}^0 u_j\omega^n \bigg)^2. 
\end{equation*}
Lemma \ref{Lp} and Proposition \ref{Bergman level} implies 
\begin{equation*}
\frac{1}{V}\int_X \dot{\phi}^0 u_j\omega^n = \lim_{k\to \infty} \frac{\Tr A_k B_k}{k^2N_k}, 
\end{equation*}
so that the linearity yields
\begin{equation*}
\frac{1}{V}\int_X P(\dot{\phi}^0)^2 \omega^n =  \lim_{k\to \infty} \frac{\Tr P_k(A_k)^2}{k^2N_k}. 
\end{equation*}
Then the above formula $(i)$ claims that the $L^2$-norm of $f_k$ converges to that of $P(\dot{\phi}^0)$. 
We may deal with $f_kP(\dot{\phi}^0)$ in a similar manner using  
\begin{equation*}
\frac{1}{V}\int_X f_kP(\dot{\phi}^0) \omega^n =\sum_{j=1}^d \bigg(\frac{1}{V}\int_X \dot{\phi}^0 u_j\omega^n \bigg) \frac{1}{V}\int_X f_k u_j \omega^n.  
\end{equation*}
Indeed the above $(ii)$ concludes that the left-hand side converges to the norm of $P(\dot{\phi}^0)$. We thus have 
\begin{equation*}
\int_X (f_k -P(\dot{\phi}^0))^2 \omega^n \to 0.  
\end{equation*}
\end{proof}

Theorem A is now a direct consequence of the above proposition, if one adopts the same argument as Proposition \ref{Bergman level} using the Tian-Zelditch-Catlin expansion.

\subsection{Definition of the reduced norm}\label{def norm} 

We take over the setting of the previous subsection. Let us set $P^\bot := \id - P$ as the projection to the orthogonal complement. The previous argument shows that the function 
\begin{equation*}
g_k:=  \frac{s^* P^\bot_k(A_k)s}{ks^*s} 
\end{equation*}
converges to $P^\bot(\dot{\phi}^0)$ in $L^p$-space. Thus for a $T$-equivariant test configuration and any integer $p\geq 1$ we obtain the description 
\begin{equation*}
\norm{(\cX, \cL)}_{T, p}^p:= \frac{1}{V}\int_X \abs{P^\bot(\dot{\phi}^0)}^p \omega^n = \lim_{k\to \infty} \frac{\Tr P^\bot_k(A_k)^p}{k^2N_k}. 
\end{equation*}
Notice that in particular the left-hand side is independent of the reference metric $\phi$. 

Let us study about much easier way to define the norm for $T$-equivariant test configurations and compare it with our previous definition. I learned the former in communication with R. Berman and M. Jonsson. When a test configuration $(\cX, \cL)$ is $T$-equivariant, any one-parameter subgroup genearated by $v \in \ft/\C$ induces an action $\mu^{(k)}$ on the central fiber. Then the trace-free parts of the generators $A_k$ and $B_k$, for $\l^{(k)}$ and $\mu^{(k)}$ are simultaneously diagonalizable with eigenvalues $(\l_1, \dots \l_{N_k})$ and $(\mu_1, \dots \mu_{N_k})$. Let us define the norm with respect to $v$ as 
\begin{equation}\label{vnorm}
\norm{(\cX, \cL)}_{v, p}^p := \lim_{k\to \infty} \frac{1}{k^pN_k}\sum_{i=1}^{N_k} \abs{\l_i + \mu_i}^p. 
\end{equation}
The definition further extends to any vector field $v \in (\ft/\C)_\Q$ by homogenuity. In any case the limit exists and described as  
\begin{equation*}
\frac{1}{V}\int_X \abs{\dot{\phi}^0 + u_v}^p \omega^n.  
\end{equation*}
In fact by Hausdorff's theorem it is enough to show the moment convergence  
\begin{equation*}
\frac{1}{V}\int_X (\dot{\phi}^0 + u_v)^p \omega^n =  \lim_{k\to \infty} \frac{\Tr (A_k +B_k)^p}{k^pN_k} 
\end{equation*}
for integral $p\geq 1$. In view of Proposition \ref{Bergman level}, this follows from the $L^p$-convergence $\dot{\phi}_k^0 + u_v \to \dot{\phi}^0 + u_v $ of Lemma \ref{Lp}. One may observe from the analytic description that the norm is continuous in $v$, Therefore the definition extends to $(\ft/\C)_\R$ by density. Finally we show: 

\begin{prop}\label{two def}
Let $T \subseteq \Aut^0(X, L)$ be a sub-torus. Then restricted to $T$-equivariant test configurations, the reduced norm  is equivalent to the infimum norm for any $1\leq p\leq 2$. In fact there exists a positive constant $\d$ such that  
\begin{equation*} 
\d \norm{(\cX, \cL)}_{T, p} \leq  \inf_{v\in (\ft/\C)_\R} \norm{(\cX, \cL)}_{v, p} \leq \norm{(\cX, \cL)}_{T, p} 
\end{equation*}
holds for any $T$-equivariant test configuration. 
\end{prop}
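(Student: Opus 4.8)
The plan is to recast both sides as pure functional analysis on $X$ equipped with the probability measure $\omega^n/V$, writing $\norm{g}_{L^p}^p:=\frac1V\int_X\abs{g}^p\omega^n$, and then to exploit that $W:=\Ham_0(\ft,\omega)$ is a finite-dimensional subspace of $L^p$ all of whose elements are bounded. Put $f:=\dot{\phi}^0$. The analytic description obtained just above the statement identifies $\inf_{v\in(\ft/\C)_\R}\norm{(\cX,\cL)}_{v,p}$ with $\inf_{w\in W}\norm{f+w}_{L^p}$; since $W$ is a linear subspace this infimum equals the $L^p$-distance $\dist_p(f,W):=\inf_{w\in W}\norm{f-w}_{L^p}$. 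On the other hand the definition of the reduced norm reads $\norm{(\cX,\cL)}_{T,p}=\norm{P^\bot f}_{L^p}$, where $P$ denotes the $L^2$-orthogonal projection onto $W$ and $P^\bot=\id-P$. The whole statement thus reduces to comparing $\dist_p(f,W)$ with $\norm{P^\bot f}_{L^p}$.

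With this dictionary the right-hand inequality is immediate: $Pf\in W$ is one admissible competitor for the infimum, so $\dist_p(f,W)\leq\norm{f-Pf}_{L^p}=\norm{P^\bot f}_{L^p}=\norm{(\cX,\cL)}_{T,p}$.

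The content lies in the left-hand inequality, and for this I would isolate a single lemma: $P$, and hence $P^\bot$, extends to a bounded operator on $L^p$ whose norm depends only on $T$, $\omega$ and $p$. Fixing an $L^2$-orthonormal basis $u_1,\dots,u_d$ of $W$ one has $Pf=\sum_{j=1}^d\inn{f,u_j}u_j$ with $\inn{f,u_j}=\frac1V\int_X f\,u_j\,\omega^n$; because each $u_j$ is a smooth, hence bounded, Hamilton function, H\"older's inequality gives $\abs{\inn{f,u_j}}\leq\norm{f}_{L^p}\norm{u_j}_{L^{p'}}$ (with $1/p+1/p'=1$), whence $\norm{Pf}_{L^p}\leq C_0\norm{f}_{L^p}$ for $C_0:=\sum_{j=1}^d\norm{u_j}_{L^{p'}}\norm{u_j}_{L^p}<\infty$. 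Consequently $\norm{P^\bot}_{L^p\to L^p}\leq 1+C_0=:\d^{-1}$. Since $P^\bot$ annihilates $W$, for every $w\in W$ we have $P^\bot f=P^\bot(f-w)$ and therefore $\norm{P^\bot f}_{L^p}\leq\d^{-1}\norm{f-w}_{L^p}$; taking the infimum over $w\in W$ yields $\d\,\norm{P^\bot f}_{L^p}\leq\dist_p(f,W)$, which is exactly $\d\,\norm{(\cX,\cL)}_{T,p}\leq\inf_{v}\norm{(\cX,\cL)}_{v,p}$.

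I expect the only genuine subtlety to sit in the reduction of the first paragraph, namely the facts that the family $\{u_v:v\in(\ft/\C)_\R\}$ exhausts $W$ and that the infimum defining the $v$-norm is attained; both follow from the injective linearity of $v\mapsto u_v$ together with the convexity and coercivity of $v\mapsto\norm{f+u_v}_{L^p}$ on the finite-dimensional space $(\ft/\C)_\R$. It is worth noting that the operator-theoretic core produces a constant $\d$ uniformly for every $p\geq1$, so no special feature of $p\leq2$ actually enters the argument; the stated range $1\leq p\leq2$ merely records the interval $p\leq\frac{n}{n-1}$ that is relevant to uniform K-stability relative to the torus.
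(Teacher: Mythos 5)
Your proof is correct and follows essentially the same route as the paper: both arguments dispose of the right-hand inequality by taking $w=P(\dot{\phi}^0)$ as a competitor, and both reduce the left-hand inequality to the $L^p$-boundedness of the projection $P$ combined with the identity $P^\bot(\dot{\phi}^0)=P^\bot(\dot{\phi}^0+u_v)$. The only difference is how that boundedness is established --- the paper first bounds $\norm{P(f)}_p\leq\norm{P(f)}_2$ (which is exactly where the hypothesis $p\leq 2$ enters) and then estimates the $L^2$-norm, whereas your termwise H\"older estimate on $\sum_j\inn{f,u_j}u_j$ works for every $p\geq 1$, so your closing remark that the restriction to $p\leq 2$ is not actually needed is a genuine, if minor, sharpening of the paper's own argument.
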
 
\begin{proof}
Since $\Ham_0(\ft, \omega)$ is finite dimensitonal, $P^\bot(\dot{\phi}^0)$ can be written to the form $\dot{\phi}+u_v$ for some $v\in (\ft/\C)_\R$. Therefore 
\begin{equation*}
\inf_{v\in (\ft/\C)_\R} \norm{(\cX, \cL)}_{v, p} \leq \norm{(\cX, \cL)}_{T, p} 
\end{equation*}
is immediate. When $p=2$ the converse is also clear. For the rest we may show that there exists a positive constant $C$ such that for any $L^p$-function $f$
\begin{equation*}
 \norm{P^\bot(f)}_p \leq C\norm{f}_p
\end{equation*}
holds. Since $P^\bot(\dot{\phi}^0)=P^\bot(\dot{\phi}^0+u_v)$, we may finish the proof applying the estimate to $f=\dot{\phi}^0+u_v$. By the triangle inequality 
\begin{equation*}
\norm{P^\bot(f)}_p \leq \norm{f}_p + \norm{f-P^\bot(f)}_p =  \norm{f}_p + \norm{P(f)}_p , 
\end{equation*}
it is equivalent to show 
\begin{equation*}
 \norm{P(f)}_p \leq C\norm{f}_p. 
\end{equation*} 
The left-hand side is bounded from above by $\norm{P(f)}_2$ because $X$ is compact. For a fixed orthonormal basis $\{u_j\}_{j=1}^d$ of $\Ham_0(\ft, \omega)$ it is estimated as 
\begin{align*}
\norm{P(f)}_2 &= \bigg[ \sum_{j=1}^d \bigg(\int_X f u_j \omega^n\bigg)^2 \bigg]^{1/2} \\ 
&\leq d \cdot \max_j \abs{\int_X f u_j \omega^n} \leq d \cdot\max_j \norm{u_j}_q \norm{f}_p, 
\end{align*}
where $q\geq 2$ is the H\"older conjugate exponent to $p$. 

\end{proof} 

\begin{dfn}\label{uniform stability relative to G}(\cite{Sze06} for the maximal torus case)
We say that a polarized manifold $(X, L)$ with a fixed subgroup $G \subseteq \Aut^0(X, L)$ is {\em uniformly K-stable relative to $G$}, if there exists a constant $\delta>0$ such that 
\begin{equation}
\DF(\cX, \cL) \geq \delta \norm{(\cX, \cL)}_{C(G), 1} 
\end{equation}
holds for any $G$-equivariant normal test configuration $(\cX, \cL)$. 
\end{dfn} 

When $G$ is trivial, it is equivalent to the usual K-stability condition. The non-uniform version is often called relative K-stability and intensively studied by many authors. In particular, \cite{DSz15} obtained a K\"ahler-Einstein metric from this non-uniform condition.   
We expect that for a reductive subgroup $G=K_\C \subseteq  \Aut^0(X, L)$ our uniform K-stability relative to $G$ is equivalent to the coercivity estimate 
\begin{equation}\label{C(G)-coercivity}
M(\phi) \geq \delta \inf_{g \in C(G)} J(\phi_g) -C,     
\end{equation} 
for any positively curved $K$-invariant metric $\phi$. It was shown by \cite{DR15} that (\ref{C(G)-coercivity}) in fact holds for K\"ahler-Einstein Fano manifolds when the center of $G=\Aut^0(X, L)$ is trivial. Also in this analytic side the restriction to $K$-invariant metrics does not lose the generality because any constant scalar curvature K\"ahler metric is invariant for some maximal compact subgroup, by the theorem of Matsushima-Lichnerowicz.  
Finally, we notice that there exists another coercivity condition which requires 
\begin{equation}\label{G-coercivity} 
M(\phi) \geq \delta \inf_{g \in G} J(\phi_g) -C     
\end{equation} 
for any positively curved metric. The latter coercivity holds for general constant scalar curvature K\"ahler manifolds, as it was very recently shown by \cite{BDL16}. 

\subsection{Examples}\label{examples}

\begin{exam}[Toric polarization]

Let $(X, L)$ be a toric polarized manifold, defined by a moment polytope $P$ on the lattice $M$.  Here $G/\C^*$ is chosen to be the maximal torus $\T$. 
In this case, a $\T$-equivariant test configuration is equivalent to so-called toric test configuration and it is represented by a rational piecewise-linear convex funtion $f$ on $P$. The class of test configurations was intensively studied in \cite{Don02}. 
Each vector field  $v \in \ft/\C$ is given by the affine function $\ell$ on the dual of $(\ft/\C)_\R$. 
Following \cite{Don02} one can compute to check 
\begin{equation*}
\norm{(\cX, \cL)}_{v, p} = \bigg( \frac{1}{V}\int_P \abs{ f +\ell}^p \bigg)^{1/p}.  
\end{equation*}
See also \cite{WN10} for general case in terms of the Okounkov body. 
After all Proposition \ref{two def} shows that our introducing reduced norm is equivalent to G. Sz\'ekelyhidi's original definition for the toric setting. 
\end{exam}

\begin{exam}[Deformation to the normal cone]
Any deformation to the normal cone of $G$-invariant ideal $\cJ$ (or the associated subscheme $V$) defines a $G$-equivariant test configuration. The space $\cX$ is given as the normalized blow-up of $X \times \C$ by the ideal $\cJ +\tau \cO$. It is endowed with the $\C^*$-action trivial on the $\cJ$-blow up $X'$ of $X$. The action of $\alpha \in \C^*$ to the exceptional divisor $P=\P(N_{V/X} \oplus  \cO_V)$ is given by $\begin{pmatrix} 1 & 0 \\  0 & \alpha \end{pmatrix}$. For each $g \in G$ the restriction of the induced $G$-action on $\cX$ to $P$ is written as $\begin{pmatrix} g_* & 0 \\  0 & 1  \end{pmatrix}$. 
Then for a sufficiently small $c\in \Q_{>0}$, $\cL:=p^*L-cP$ defines an ample $\Q$-line bundle so that the construction produces a large amount of $G$-equivariant test configurations. 

For any 1-PS to the center, the same family $(\cX, \cL)$ with the action to $P$ by $\begin{pmatrix} \mu(\alpha)_* & 0 \\  0 & \alpha \end{pmatrix}$ defines a $G$-equivariant test configuration $(\cX_\mu, \cL_\mu)$. 


For example, $\P^2$ has no non-trivial $GL(3, \C)$-invariant ideal so that it does not admit any deformation to the normal cone. There actually exists no $GL(3, \C)$-equivariant (normal) test configuration and $\P^2$ is uniformly K-stable relative to $GL(3, \C)$.  

\end{exam}


\section{Characterization of product test configurations}\label{characterization}

In this section we assume that the reduced norm is zero, to show the productness of a $T$-equivariant test configuration. 
By the definition of the reduced norm it implies that $\dot{\phi}^0 \in \Ham_0(\ft, \omega)$. Let us denote the associated generator by $B_k \in (\ft/\C)_\R \subseteq  \fsl(H^0(\cX_0, \cL_0^{\otimes k}))$. The problem here is that we do not a priori know whether the generator $A_k$ of $(\cX, \cL)$ coincides with $B_k$. 

\begin{lem}
The Hamilton function $\dot{\phi}^0$ corresponds to an element of $(\ft/\C)_\Q$. 
\end{lem}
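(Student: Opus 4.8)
The plan is to identify the vector field underlying $\dot{\phi}^0$ with the projected element $P(\cX, \cL) = \lim_{k\to\infty} P_k(A_k)$, whose rationality is already guaranteed. First I would unwind the hypothesis: since the reduced norm vanishes we have $P^\bot(\dot{\phi}^0)=0$, so $\dot{\phi}^0 = P(\dot{\phi}^0)$ belongs to $\Ham_0(\ft, \omega)$ and therefore equals $u_v$ for a unique $v \in (\ft/\C)_\R$. The entire content of the lemma is that this $v$ is in fact rational.

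Next I would match the analytic projection $P$ with the algebraic limit. Fixing an equivariant embedding and an orthonormal basis $\{s_i\}$ diagonalizing $A_k$, the $L^p$-convergence $f_k = \tfrac{s^* P_k(A_k) s}{k s^* s} \to P(\dot{\phi}^0) = \dot{\phi}^0$ proved in Section \ref{proof of the main theorem} is available. Since each $P_k(A_k)$ lies in the image of $(\ft/\C)_\R$, it is the trace-free generator of a vector field $w_k$, so Proposition \ref{product case} identifies $f_k$ with $u_{w_k}$ up to an $O(k^{-1}\log k)$ error. Using that $P_k(A_k)$ converges to $w := P(\cX, \cL)$ together with the fact that the assignment $v \mapsto u_v$ is linear, hence continuous on the finite-dimensional space $(\ft/\C)_\R$, I would pass to the limit to obtain $f_k \to u_w$. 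Comparing the two limits then forces $\dot{\phi}^0 = u_w$, that is $v = w = P(\cX, \cL)$.

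It finally remains to invoke the rationality of $P(\cX, \cL)$. As recalled in the introduction, in a fixed basis of $(\ft/\C)_\R$ the coefficients of $P_k(A_k)$ are rational functions of $k$, and by the equivariant Riemann-Roch-Hirzebruch theorem they converge to rational numbers; hence $w \in (\ft/\C)_\Q$, and $\dot{\phi}^0 = u_w$ corresponds to a rational element as claimed. The step I expect to be the main obstacle is the bookkeeping in the second paragraph: one must make sure the convergence of the endomorphisms $P_k(A_k) \to w$ genuinely translates, through Proposition \ref{product case} and the continuity of $v \mapsto u_v$, into convergence of the Hamilton functions $u_{w_k} \to u_w$ strong enough to pin down $\dot{\phi}^0$ as $u_w$ itself, rather than merely matching $L^2$-norms or moments as in Theorem A.
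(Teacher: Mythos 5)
Your outline is essentially the paper's, but the weight is distributed in the wrong place: the step you defer to (``the rationality of $P(\cX,\cL)$... already guaranteed... as recalled in the introduction'') is not proved anywhere else in the paper --- the paper's proof of this very lemma opens by saying that the lemma \emph{is} precisely that rationality claim, and then supplies the missing argument. So as written your third paragraph is circular: you must actually carry out the equivariant Riemann--Roch computation rather than cite it. Concretely, the paper picks an orthogonal basis $v_1,\dots,v_d$ of $(\ft/\C)_\R$ with each $v_j$ coming from a 1-PS, writes
\begin{equation*}
\frac{1}{V}\int_X \dot{\phi}^0 u_j\,\omega^n \;=\; \lim_{k\to\infty}\frac{\Tr A_k B_{j,k}}{k^2 N_k},
\end{equation*}
and shows the right-hand side is rational via the polarization identity $2\Tr A_kB_{j,k}=\Tr(A_k+B_{j,k})^2-\Tr A_k^2-\Tr B_{j,k}^2$: each of the three terms is a weight-square-sum of a genuine $\C^*$-action on $(\cX_0,\cL_0)$, hence by equivariant Hirzebruch--Riemann--Roch a polynomial in $k$ with integer coefficients, so the limit is rational and $\dot{\phi}^0$ is a $\Q$-combination of the $u_j$. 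This is exactly the content hiding behind ``the coefficients of $P_k(A_k)$ are rational functions of $k$''; note that $A_k$ itself need not have rational projection coefficients at finite $k$ in any obvious way until one sees this polynomial structure. By contrast, the step you flag as the main obstacle (your second paragraph, pinning down $\dot{\phi}^0=u_w$ via the $L^p$-convergence of $f_k$ and continuity of $v\mapsto u_v$) is sound --- one only needs $w_k=P_k(A_k)$ to stay bounded, which follows once the traces are known to be rational functions of $k$ with convergent normalizations, and uniformity of the $O(k^{-1}\log k)$ error over bounded $w$ follows by linearity on the finite-dimensional space --- but it is also avoidable: since $\dot{\phi}^0=u_v$ already lies in $\Ham_0(\ft,\omega)$, it is determined by its $L^2$-pairings against the $u_j$, so one never needs pointwise or $L^p$ identification with $u_{P(\cX,\cL)}$, only the moments.
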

\begin{proof}
This is precisely the rationality of $P(\cX, \cL)$ which we mentioned in the introduction. 
It is essentially the rationality of the Riemann-Roch coefficients and the following argument seems familiar one to the experts, but I would like to write down the detail for the convenience to readers. 
We denote the the Hamilton vector field for $\dot{\phi}^0$ by $v \in (\ft/\C)_\R$. 
We may take an orthogonal basis $v_1, \dots, v_d$ for $(\ft/\C)_\R$ such that each $v_j$ $(1 \leq j \leq d)$ derives from a 1-PS. The associated generators $B_{j, k} \in \fsl(H^0(\cX_0, \cL_0^{\otimes k}))$ are commutative to each other. Let us denote the real parts of the corresponding normalized Hamilton functions by $u_1, \dots, u_d$.  

As we took $B_{j, k}$ from some 1-PS, the right-hand side of the formula 
\begin{equation*} 
\frac{1}{V}\int_X \dot{\phi}^0 u_j \omega^n = \lim_{k\to \infty} \frac{\Tr A_k B_{j, k}}{k^2N_k}  
\end{equation*}
can be described as a characteristic number hence it is a rational number for $1 \leq j \leq d$. We must use $A_k$ instead of $B_k$ which is a priori not rational. Actually one can apply the equivariant Hirzebruch-Riemann-Roch formula (see \cite{EG98} and \cite{EG00}, or \cite{BHJ15}, Appendix B) to $A_k +B_{j, k}$, $A_k$, and $B_{j, k}$, each of which derives from a $\C^*$-action to $(\cX_0, \cL_0)$. For the induced action to $H^0(\cX_0, \cL_0^{\otimes k})$ sum of the weights like $\Tr A_k$ appears in the degree two term in the equivariant cohomology $H^*_{\C^*}(\pt; \Z) \simeq \Z[t]$. The square sum of the weights like $\Tr A_k^2$ appears in the degree four term. Finally the above trace can be expressed as 
\begin{equation*}
2\Tr A_k B_{j, k} = \Tr(A_k + B_{j, k})^2 - \Tr A_k^2 -\Tr B_{j, k}^2. 
\end{equation*}
The right-hand side is a polynomial in $k$, which has integer coefficients. In conclusion, $\dot{\phi}^0$ is a $\Q$-linear combination of $u_1, \dots u_d$. Since each of the vector fields $v_1, \dots v_d$ derives from a 1-PS and is attached to the torus, we conclude the claim. 

\end{proof} 

Now we may assume that $\dot{\phi}^0$ is further corresponds to a 1-PS to $T$, by replacing $(\cX, \cL)$ with its base change by $\tau \mapsto \tau^d$. In what follows we shall prove $A_1=B_1$. We may as well assume that $\cL$ is sufficiently relatively ample after the line bundle is replaced with its high tensor power. We then set $A:=A_1$ and $B:=B_1$. 

Let us consider the test configuration $(\cX', \cL')$ which is isomorphic to $(\cX, \cL)$ as a family but the $\C^*$-action is given by $A-B$. In fact, for any fixed equivariant embedding of $(\cX, \cL)$ the same $(\cX', \cL')$ is constructed by taking the Zariski closure of the orbit in $\P(H^0(\cX_0, \cL_0)^\vee)$. Moreover the generator for $\fsl(H^0(\cX'_0,  (\cL'_0)^{\otimes k}))$ is given by $(A-B)_k=A_k-B_k$. 
At the end we compute the (non-reduced) norm of this test configuration. It decomposes into 
\begin{equation*}
\norm{(\cX', \cL')}_2^2 = \lim_{k \to \infty} \frac{\Tr (A_k-B_k)^2}{k^2N_k}
= \lim_{k \to \infty} \frac{\Tr A_k^2 -2\Tr A_k B_k -\Tr B_k^2}{k^2N_k}. 
\end{equation*} 
For the first term we already have 
\begin{equation*}
\int_X (\dot{\phi}_k^0)^2 \omega^n = \frac{\Tr A_k^2}{k^2N_k} +O(k^{-1}). 
\end{equation*} 
Since $\dot{\phi}^0$ is the real part of a Hamilton function, the second term has the expression: 
\begin{equation*}
\int_X \dot{\phi}_k^0\dot{\phi}^0 \omega^n = \frac{\Tr A_k B_k}{k^2N_k} +O(k^{-1}\log k). 
\end{equation*} 
Finally the equivariant Chern-Weil theory translates the third term as 
\begin{equation*}
\int_X (\dot{\phi}^0)^2 \omega^n = \lim_{k \to \infty} \frac{\Tr B_k^2}{k^2N_k}. 
\end{equation*} 
Combining these with Lemma \ref{Lp} we deduce $\norm{(\cX', \cL')}_2=0$. It then follows from \cite{BHJ15} that $(\cX', \cL')$ is trivial. In particular $A_1=B_1$ holds. 

\qed


\newpage

\end{document}